\newtheorem{thm}{Theorem}[section]
\newtheorem{defn}[thm]{Definition}
\newtheorem{lemma}[thm]{Lemma}
\newtheorem{prop}[thm]{Proposition}
\newtheorem{cor}[thm]{Corollary}
\newtheorem{rmk}[thm]{Remark}
\newtheorem{example}[thm]{Example}
\newtheorem{conj}[thm]{Conjecture}
\newtheorem{fact}[thm]{Fact}
\newtheorem{question}[thm]{Question}
\newcommand\e\epsilon
\def\Con{\operatorname{Con}}
\newcommand{\cstar}{$\mathrm{C}^*$}
\def\indsym#1#2{%
  \setbox0=\hbox{$\m@th#1x$}%
  \kern\wd0%
  \hbox to 0pt{\hss$\m@th#1\mid$\hbox to 0pt{$\m@th#1^{#2}$}\hss}%
  \lower.9\ht0\hbox to 0pt{\hss$\m@th#1\smile$\hss}%
  \kern\wd0}
\def\nindsym#1#2{%
  \setbox0=\hbox{$\m@th#1x$}%
  \kern\wd0%
  \hbox to 0pt{\hss$\m@th#1\not$\kern1.4\wd0\hss}
  \hbox to 0pt{\hss$\m@th#1\mid$\hbox to 0pt{$\m@th#1^{\,#2}$}\hss}%
  \lower.9\ht0\hbox to 0pt{\hss$\m@th#1\smile$\hss}%
  \kern\wd0}
\def\dotminussym#1#2{%
  \setbox0=\hbox{$\m@th#1-$}%
  \kern.5\wd0%
  \hbox to 0pt{\hss\hbox{$\m@th#1-$}\hss}%
  \raise.6\ht0\hbox to 0pt{\hss$\m@th#1.$\hss}%
  \kern.5\wd0}
\newcommand{\dotminus}{\mathbin{\mathpalette\dotminussym{}}}
\def \Th{\operatorname{Th}}
\def \R{\mathcal R}
\def \d{\mathbf{d}}
\def \rest{\operatorname{rest}}
\def \Sent{\operatorname{Sent}}
\def \Theorem{\operatorname{Theorem}}
\def \Pre{\operatorname{Pre-Cond}}
\def \Cond{\operatorname{Cond}}
\title{Operator algebras with hyperarithmetic theory}
\author{Isaac Goldbring and Bradd Hart}
\thanks{I. Goldbring was partially supported by NSF CAREER grant DMS-1349399. B. Hart was supported by an NSERC Discovery Grant}
\address{Department of Mathematics\\University of California, Irvine, 340 Rowland Hall (Bldg.\# 400),
Irvine, CA 92697-3875}
\email{isaac@math.uci.edu}
\urladdr{http://www.math.uci.edu/~isaac}
\address{Department of Mathematics and Statistics, McMaster University, 1280 Main St., Hamilton ON, Canada L8S 4K1}
\email{hartb@mcmaster.ca}
\urladdr{http://ms.mcmaster.ca/~bradd/}
\begin{document}

\begin{abstract}
We show that the following operator algebras have hyperarithmetic theory:  the hyperfinite II$_1$ factor $\mathcal R$, $L(\Gamma)$ for $\Gamma$ a finitely generated group with solvable word problem, $C^*(\Gamma)$ for $\Gamma$ a finitely presented group, $C^*_\lambda(\Gamma)$ for $\Gamma$ a finitely generated group with solvable word problem, $C(2^\omega)$, and $C(\mathbb P)$ (where $\mathbb P$ is the pseudoarc).  We also show that the Cuntz algebra $\mathcal O_2$ has a hyperarithmetic theory provided that the Kirchberg embedding problems has an affirmative answers.  Finally, we prove that if there is an existentially closed (e.c.) II$_1$ factor (resp. \cstar-algebra) that does not have hyperarithmetic theory, then there are continuum many theories of e.c. II$_1$ factors (resp. e.c. \cstar-algebras).
\end{abstract}

\maketitle

\section{Introduction}


It is a typical question in logic to ask whether or not a given mathematical structure is \emph{decidable}, that is, whether or not there is an algorithm that, upon input a first-order sentence in the language of that structure, can decide whether or not the sentence is true or false.  For example, theorems of Tarski establish that the real and complex fields are decidable, while the famous G\"odel Incompleteness Theorem states that the ring of integers is undecidable.

When a structure is undecidable or one cannot determine its decidability, it is also common to instead prove results that say that the structure is no more complicated than another structure (in the sense of Turing reduction; see Section 2 below).  It is the purpose of this paper to prove results along these lines for various familiar operator algebras.  Temporarily, let us say that an operator algebra is \emph{hyperarithmetic} if it is no more complicated than the ring of integers.  (Technically speaking, this is a fairly low level hyperarithmetic set and the precise definitions will be given in Section 2.)

In this paper, we prove that the following operator algebras are hyperarithmetic:

\begin{enumerate}
\item the hyperfinite II$_1$ factor $\mathcal R$;
\item $L(\Gamma)$ for $\Gamma$ a finitely generated group with solvable word problem;
\item $C^*(\Gamma)$ for $\Gamma$ a finitely presented group;
\item $C^*_\lambda(\Gamma)$ for $\Gamma$ a finitely generated group with solvable word problem;
\item $C(2^\omega)$;
\item $C(\mathbb P)$ (where $\mathbb P$ is the pseudoarc).
\item the Cuntz algebra $\mathcal O_2$ (provided every \cstar-algebra embeds into an ultrapower of $\mathcal O_2$).
\end{enumerate}

To ensure that the above results are nontrivial, we note that the main result of \cite{BCI} implies that there are continuum many different theories of II$_1$ factors (resp. \cstar-algebras) whence there can be only countably many hyperarithmetic II$_1$ factors and \cstar-algebras.  

Our results are proven using two main techniques.  The first is to quote a result of Camrud, McNicholl, and the current author from \cite{CGM}, which states that if an algebra has a \emph{hyperarithmetic presentation}, then it is hypearithmetic; this technique applies to examples (1)-(4) from the above list (where the results of \cite{canyou} are used to show that the algebras have hyperarithmetic presentations).  

Our other main technique is to prove the continuous analogue of a classical result (see \cite[Chapter 7, Section 2]{HW}), which shows that if $T$ is an arithemetic theory, then its \emph{finite forcing companion} is hyperarithmetic.  This technique is how we deduce the remaining examples.

It is an open problem in the model theory of operator algebras whether or not there are two distinct theories of \emph{existentially closed} (e.c.) II$_1$ factors (resp. \cstar-algebras).  Here, an e.c. operator algebra is the model-theoretic formulation for what it means for the operator algebra to be ``algebraically closed'' in the sense of field theory.  In this paper, we prove the continuous analogue of a fact from classical logic due to Simpson (\cite[Chapter 7, Section 4]{HW}), which, when specialized to the above context, states if there is an e.c. II$_1$ factor (resp. \cstar-algebra) that is not hyperarithmetic, then there are actually continuum many distinct theories of e.c. II$_1$ factors (resp. \cstar-algebras).  In the case of groups, one can show that the theory of the so-called \emph{infinitely generic} groups interprets second-order arithmetic (so is not hyperarithmetic), leading to a quick proof that there are continuum many distinct theories of e.c. groups.  We conjecture that the same fact should be true in the II$_1$ factor and \cstar-algebra contexts.

The appropriate logical framework for studying operator algebras is \emph{continuous logic} and there have been a plethora of attempts to approach this logic with operator algebraists in mind (e.g. \cite{munster}).  For this reason, and to keep the paper relatively short, we assume that the reader is somewhat familiar with continuous model theory.

However, we do not assume that the reader is familiar with the requisite computability theory.  For that reason, in Section 2 we give a rapid account of the basic facts needed to follow the proofs in this paper.  We also take this opportunity to treat G\"odel numbering in continuous logic and to discuss some subtleties about relative computability of theories in continuous logic.

We would like to thank Timothy McNicholl and Andreas Thom for helpful discussions around this work.

%

\section{Preliminaries}

\subsection{Computability theory}

We begin this section with a brief discussion of the basic notions from computability theory; a good and accessible reference is \cite{Enderton} and a more advanced reference is \cite{Soare}.

Computability theory studies the question of what it means for a function $f:\mathbb N^k\to \mathbb N$ to be ``computable.''  Na\"ively speaking, such a function $f$ should be computable if there is an algorithm\footnote{This algorithm is allowed to refer to basic arithmetic operations such as addition and multiplication.} such that, upon input $(a_1,\ldots,a_k)\in \mathbb N^k$, runs and eventually halts, outputting the result $f(a_1,\ldots,a_k)$.  There are many approaches to formalizing this heuristic (e.g. Turing-machine computable functions and recursive functions) and all known formalizations can be proven to yield the same class of functions.  This latter fact gives credence to the \emph{Church-Turing thesis}, which states that this aformentioned class of functions is indeed the class of functions that are computable in the na\"ive sense described above.  In the rest of this paper, we will never argue about this class of functions using any formal definition but will only argue informally in terms of some kind of algorithm or computer; this is often referred to as arguing using the Church-Turing Thesis.  We call a subset of $\mathbb N^k$ computable if its characteristic function is computable. 

We will also consider a relative form of computability in the following sense:  given a function $g:\mathbb N^l\to \mathbb N$, a \textbf{$g$-algorithm} is an algorithm in the usual sense that is also allowed to make ``queries'' to an ``oracle'' which has access to the function $g$.  Thus, for example, in the course of running the algorithm, the computer is allowed to query the oracle and ask what is the value of $g(15)$ and then use that information in the computation.  A function $f$ as above is then said to be \textbf{$g$-computable} (or that \textbf{$g$ computes $f$}) if it is computable using a $g$-algorithm.  Of course, if $g$ were itself computable, then $f$ would be computable as well.  We say that a subset of $\mathbb N^k$ is $g$-computable if its characteristic function is $g$-computable.

If $f:\mathbb N^k\to \mathbb N$ and $g:\mathbb N^l\to \mathbb N$ are functions, we say that $f$ \textbf{Turing reduces} to $g$, denoted $f\leq_T g$, if $f$ is $g$-computable.  This is a preorder on the set of functions from finite cartesian powers of $\mathbb N$ to $\mathbb N$.  We say that $f$ and $g$ as above are \textbf{Turing equivalent}, denoted $f\equiv_T g$, if $f\leq_T g$ and $g\leq_T f$.  This is indeed an equivalence relation whose equivalence classes are called \textbf{Turing degrees}.  We typically denote Turing degrees by $\d$.  We often abuse notation and refer to a $\d$-algorithm when we mean a $g$-algorithm for some $g\in \d$.  (The choice of representative of $\d$ is often irrelevant).  Similarly, we speak of $\d$-computable functions and sets.  The preorder $\leq_T$ induces a partial order, also denoted $\leq_T$, on the set of Turing degrees.  We write $\d_1<_T \d_2$ if $\d_1\leq_T \d_2$ but $\d_1\not=\d_2$.  The Turing degree consisting of precisely the computable functions is denoted by $\mathbf 0$.  It is clear that $\mathbf 0$ is the minimum element of the poset of Turing degrees.  

Given a Turing degree $\d$, there is a unique Turing degree $\d'$, called the \textbf{jump of $\d$}, such that $\d<_T \d'$ and whenever $\d<_T \mathbf e$, then $\d'\leq_T \mathbf e$.  ($\d'$ is the Turing degree of the \emph{halting problem} relativized to $\d$.)  We can iterate the jump operation a finite number of times and we let $\mathbf d^{(n)}$ denote the $n^{\text{th}}$ iterated jump of $\d$.  We will shortly discuss how to iterate the jump an infinite number of times as well.

It will become convenient for us to use an equivalent formulation of these concepts in terms of definability in first-order arithmetic.  We consider the first-order language for studying the natural numbers that contains function symbols for addition and multiplication, a relation symbol for the ordering, and constant symbols for $0$ and $1$.  One defines a hierarchy of formulae, called the \textbf{arithmetic hierarchy}, first by defining $\Delta_0$ formulae to consist of only those formulae defined using ``bounded'' quantifiers (such as $\forall x<m$ and $\exists x<m$).  Supposing by recursion that one has defined $\Sigma_m$ and $\Pi_m$ formulae, one defines $\Sigma_{m+1}$ formulae to be those of the form $\exists x \varphi$, where $\varphi$ is $\Pi_m$, and $\Pi_{m+1}$ formulae to be of the form $\forall x\psi$, where $\psi$ is $\Sigma_m$.  A set $A\subseteq \mathbb N$ is called $\Sigma_m$ or $\Pi_m$ if it can be defined by a formula of that type, and it is called $\Delta_m$ if it can be defined both by a $\Sigma_m$ formula and a $\Pi_m$ formula.  

Since the $\Delta_0$ formulae are precisely those that define computable sets, given a degree $\d$, if we change the definition of $\Delta_0$ formulae above to consider those formulae which define $\d$-computable sets, and then continue the recursive definitions as before, one arrives at the relativized hierachy of formulae denoted $\Sigma_n^\d$, $\Pi_n^\d$, and $\Delta_n^\d$.  We will often use:

\begin{fact}[Post's Theorem]
If $\d$ is a Turing degree and $A\subseteq \mathbb N$, then $A$ is $\Sigma_{n+1}^\d$ if and only if it is $\Sigma_1^{\d^{(n)}}$.  Consequently, $A$ is $\Delta_{n+1}^\d$ if and only if $A\leq_T \mathbf \d^{(n)}$.  
\end{fact}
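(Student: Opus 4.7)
The strategy is induction on $n$, proving the first equivalence; the ``consequently'' clause then follows. The base case $n=0$ is tautological since $\d^{(0)}=\d$. For the inductive step, by definition a $\Sigma_{n+2}^\d$ set has the form $A = \{x : \exists y\, R(x,y)\}$ with $R \in \Pi_{n+1}^\d$; applying the inductive hypothesis to $\neg R$ gives $R \in \Pi_1^{\d^{(n)}}$. So the step reduces to showing that the $\Sigma_1^{\d^{(n+1)}}$ sets coincide with existential projections of $\Pi_1^{\d^{(n)}}$ relations, which is the heart of the argument.

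For the easier inclusion, any $\Pi_1^{\d^{(n)}}$ predicate $R$ has $\d^{(n)}$-c.e.\ complement, so the oracle $\d^{(n+1)}$---which, being the halting problem relativized to $\d^{(n)}$, decides membership in any $\d^{(n)}$-c.e.\ set---uniformly decides $R$. Then $\{x : \exists y\, R(x,y)\}$ is $\d^{(n+1)}$-c.e., hence $\Sigma_1^{\d^{(n+1)}}$ via the relativized normal form theorem for c.e.\ sets.

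For the other inclusion, I argue that every $\d^{(n+1)}$-c.e.\ set $A$ admits a $\Sigma_2^{\d^{(n)}}$ definition. Taking any enumeration procedure for $A$ with oracle $\d^{(n+1)}$, one rewrites each finite trace of the computation---including its finitely many oracle queries together with proposed ``yes''/``no'' answers---as a $\d^{(n)}$-decidable condition; a proposed ``yes'' answer is then witnessed by a $\Sigma_1^{\d^{(n)}}$ statement (an existential search for a halting $\d^{(n)}$-computation) and a ``no'' answer by a $\Pi_1^{\d^{(n)}}$ statement. Existentially quantifying over the trace and the witnesses to the ``yes'' answers, and universally quantifying the verification of the ``no'' answers, yields a $\Sigma_2^{\d^{(n)}}$ formula for $A$, which is an existential projection of a $\Pi_1^{\d^{(n)}}$ relation, as required.

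The ``consequently'' clause then follows immediately: $A \in \Delta_{n+1}^\d$ iff both $A$ and its complement are $\Sigma_1^{\d^{(n)}}$, which by the relativized Kleene theorem (a set is computable iff both it and its complement are c.e.) is equivalent to $A \leq_T \d^{(n)}$. The main obstacle is the bookkeeping in the reverse inclusion above: one must verify that rewriting the finitely many oracle queries contributes exactly two alternations of quantifiers (and no more) over $\d^{(n)}$-decidable predicates, which is what makes a single jump correspond to exactly one level of the arithmetical hierarchy. The remaining steps are routine applications of the normal form theorem and the definition of the jump.
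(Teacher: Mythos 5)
The paper states this as a classical Fact (Post's Theorem) and gives no proof of its own, so there is nothing to compare against; your argument is the standard textbook proof (induction on $n$, with the jump absorbing one quantifier in each direction via the finite-oracle-use/trace decomposition) and it is correct, including the delicate point that the ``yes''/``no'' query verifications contribute exactly one $\exists\forall$ alternation over $\d^{(n)}$-decidable predicates. The deduction of the $\Delta_{n+1}^{\d}$ clause from the relativized complementation theorem is also correct.
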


A set is called \textbf{arithmetical} if it is $\Sigma_n$ for some $n$.  Thus, a consequence of Post's theorem is that a set is arithmetical if and only if it is Turing reducible to $\mathbf 0^{(n)}$ for some $n$.

As referred to in the title of this paper, we need to work with so-called hyperarithmetic sets, which is a class of sets properly extending the class of arithmetic sets.  We first present this class in the spirit of the previous discussion.  This requires us to move pass first-order logic and into second-order logic, where we now study natural numbers using a logic which, besides the usual first-order expressive power, also allows one to quantify over subsets of natural numbers and which includes a relation symbol for membership between natural numbers and sets of natural numbers.  We typically use lowercase letters for variables ranging over numbers and uppercase letters for variables ranging over sets of natural numbers.  

An \textbf{arithmetical} formula is one whose only quantifiers are number quantifiers.  A $\Sigma_1^1$ formula is one of the form $\exists X_1\cdots \exists X_n \varphi$, where $\varphi$ is an arithmetic formula.  A subset of $\mathbb N$ (or $\mathcal P(\mathbb N)$) is called $\Sigma_1^1$ if it is defined by a $\Sigma_1^1$ formula.  $\Pi_1^1$ formulae are defined in same way using universal set quantifiers.  A subset of $\mathbb N$ is called \textbf{hyperarithmetic} if it is both $\Sigma_1^1$ and $\Pi_1^1$.

There is a way to view hyperarithmetic sets in a manner analogous to Post's theorem.  Without going into too many details, there is a countable ordinal $\omega_1^{CK}$ such that, for all ordinals $\gamma<\omega_1^{CK}$, one can define the $\gamma^{\text{th}}$ jump of $\mathbf 0$, denoted $\mathbf 0^{(\gamma)}$.  (See \cite{sacks} for the details.) When $\gamma\in \omega$, this jump operation agrees with the earlier notion denoting finitely many iterated jumps.  One can then show that a subset of $\mathbb N$ is hyperarithmetic if and only if it Turing reduces to $\mathbf 0^{(\gamma)}$ for some $\gamma<\omega_1^{CK}$.  One can also relativize this procedure and speak of $\d^{(\gamma)}$ for arbitrary Turing degrees $\d$ and ordinals $\gamma<\omega_1^\d$.

Of particular interest to us is the ``smallest'' hyperarithmetic, non-arithmetic degree, namely $\mathbf 0^{(\omega)}$.  One can show that the Turing degree of the set of G\"odel numbers for true first-order sentences of arithmetic is precisely $\mathbf 0^{(\omega)}$.

\subsection{G\"odel numbering in continuous logic}

Throughout this paper, $L$ denotes a continuous first-order language.  As in the classical case, it makes sense to speak of whether or not $L$ is \textbf{computable}.  Roughly speaking, $L$ is computable if there is an enumeration of the symbols for which one can computably tell what kind of symbol a given number represents, its arity, and, new to the continuous case, in case of function and predicate symbols, one can also uniformly computably calculate moduli of uniform continuity of the symbols.  A precise definition can be found in \cite{FMcN}.  Suffice it to say, the languages that occur in the operator algebra context are all computable.  We thus make the convention that, \emph{in the rest of this paper, $L$ denotes a computable continuous language}.

The set of \textbf{restricted} $L$-formulae is the smallest collection of $L$-formulae containing the atomic formulae and closed under the unary connectives $x\mapsto 0$, $x\mapsto 1$, and $x\mapsto \frac{x}{2}$, the binary connective $(x,y)\mapsto x\dotminus y:=\max(x-y,0)$, and the quantifiers $\sup_x$ and $\inf_x$.  We let $\operatorname{Form}_L$ and $\Sent_L$ denote the set of restricted $L$-formulae and restricted $L$-sentences respectively.

Since $L$ is computable, one can assign G\"odel numbers to restricted $L$-formulae and we let $\ulcorner \varphi\urcorner$ denote the G\"odel number of $\varphi$.  We let $\ulcorner \operatorname{Form}_L\urcorner$ and $\ulcorner \Sent_L\urcorner$ denote the sets of G\"odel numbers of restricted $L$-formulae and restricted $L$-sentences respectively.  If $k\in \ulcorner \operatorname{Form}_L\urcorner$, we let $\varphi_k$ denote the formula that it codes.  Every restricted $L$-formula is equivalent to one in prenex normal form and we use the usual notation $\forall_n$ and $\exists_n$ here as in the classical case.  Also as in the classical case, for each $n$, the set of G\"odel numbers of $\forall_n$ and $\exists_n$ $L_{\rest}$-sentences are computable; we refer to these sets as $\ulcorner \forall_{n}\text{-}\Sent_L\urcorner$ and $\ulcorner \exists_n\text{-}\Sent_L\urcorner$ respectively.  We let $\varphi_{\Sent_L}(x)$ be an arithmetical formula defining $\ulcorner \Sent_L\urcorner$ and similarly for $\varphi_{\forall\text{-}\Sent_L}(x)$, $\varphi_{\exists\text{-}\Sent_L}(x)$, $\varphi_{\operatorname{qf}_L}(x)$, the latter of which defines the codes of quantifier-free $L$-formulae.  We also let $\Phi_{\Sent_L}(X):=\forall p(p\in X \leftrightarrow \varphi_{\Sent_L}(p))$, another arithmetic formula.  If $\Phi_{\Sent_L}(X)$ holds, we set $T_X:=\{\sigma\in \Sent_L \ : \ \ulcorner\sigma\urcorner\in X\}$.

The following lemma is obvious:

\begin{lemma}\label{fandg}
There are recursive functions $f,g:\mathbb N^2\to \mathbb N$ such that, if $p,q\in \ulcorner \operatorname{Form}_L\urcorner$, then $f(p,n)=\ulcorner \varphi_p\dotminus 2^{-n}\urcorner$ and $g(p,q)=\ulcorner \varphi_p\dotminus \varphi_q\urcorner$.
\end{lemma}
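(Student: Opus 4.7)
The plan is to leverage the fact that computability of $L$ builds in a Gödel numbering whose syntactic constructors are primitive recursive. First, since $x\mapsto x/2$ is one of the unary connectives of the restricted syntax, there is a primitive recursive function $H:\mathbb N\to\mathbb N$ sending the code of any restricted formula $\psi$ to the code of $\psi/2$; similarly there is a recursive constant giving $\ulcorner 1\urcorner$. Iterating $H$ from $\ulcorner 1\urcorner$ a total of $n$ times then yields, by primitive recursion on $n$, a recursive function $h:\mathbb N\to\mathbb N$ with $h(n)=\ulcorner 2^{-n}\urcorner$.

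Next, because $\dotminus$ is a binary connective of the restricted grammar, the same considerations produce a primitive recursive function $D:\mathbb N^2\to\mathbb N$ such that, whenever $p,q\in\ulcorner\operatorname{Form}_L\urcorner$, $D(p,q)=\ulcorner\varphi_p\dotminus\varphi_q\urcorner$. I would then set $g:=D$ and $f(p,n):=D(p,h(n))$; both are recursive as composites of recursive functions, and their behavior on inputs outside $\ulcorner\operatorname{Form}_L\urcorner$ is unconstrained by the statement, so the usual convention of returning $0$ on such inputs makes them total.

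The only point requiring any care is the assertion that the code-level constructors for unary and binary connectives are primitive recursive. This, however, is precisely what is packaged into the notion of a computable continuous language from \cite{FMcN}: the Gödel numbering is arranged so that the parse-tree operations corresponding to each connective act recursively on codes. With that in hand, no substantive obstacle remains; the lemma is a routine verification, which is why the authors flag it as obvious.
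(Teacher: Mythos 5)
Your argument is correct and is exactly the routine verification the authors have in mind when they declare the lemma obvious (the paper offers no proof at all): the computability of $L$ guarantees that the code-level constructors for the connectives $x\mapsto \frac{x}{2}$ and $\dotminus$ are primitive recursive, and composing them as you do yields $f$ and $g$. No issues.
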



Occasionally we will need to consider a set $C:=\{c_1,c_2,\ldots\}$ of new constant symbols and we set $L(C):=L\cup C$.  In this case, we can also assume that $L(C)$ is computable and that one can computably tell whether or not an element of $\ulcorner \operatorname{Form}_{L(C)}\urcorner$ actually belongs to $\ulcorner \operatorname{Form}_L\urcorner$ or not.

\subsection{Theories}

By an \textbf{$L$-theory}, we simply mean a subset of $\Sent_L$.  If $T$ is an $L$-theory, we set $\ulcorner T\urcorner:=\{k\in \ulcorner \Sent_L\urcorner \ : \ \varphi_k\in T\}$.  A theory $T$ is \textbf{complete} if, for each $L$-sentence $\sigma$, there is a unique $r$ such that $|\sigma-r|\in T$.

Given an $L$-structure $M$, we define the \textbf{theory of $M$} to be the function $\Th(M):\ulcorner \Sent_L\urcorner \to \mathbb R$ defined by $\Th(M)(k):=\varphi_k^M$.  As it stands, the theory of $M$ is not actually a theory in the above sense of the word.  However, if one considers the zeroset of $\Th(M)$, that is, the set $\{k \ : \ \varphi_k^M=0\}$, then one gets a theory in the above sense (after identifying $k$ with $\varphi_k$).  Under this identification, the complete theories are precisely those of the form $\Th(M)$ for some structure $M$.  

This paper is all about relative computability of theories.  Given the above two paragraphs, there is a subtlety that needs to be explained.  First, a definition:

\begin{defn}
Suppose that $\d$ is a Turing degree.
\begin{enumerate}
\item If $T$ is an $L$-theory, we say that $T$ is \textbf{$\d$-computable} if $\ulcorner T\urcorner$ is a $\d$-computable subset of $\mathbb N$.
\item If $M$ is an $L$-structure, we say that $\Th(M)$ is \textbf{$\d$-computable} if there is a $\d$-algorithm such that, for any $k\in \ulcorner \Sent_L\urcorner$ and any rational $\epsilon>0$, returns an interval $I$ with rational endpoints such that $|I|<\epsilon$ and $\varphi_k^M\in I$.
\end{enumerate}
\end{defn}

This translation is not perfect as far as relative computability goes: 

\begin{lemma}
If $M$ is an $L$-structure, then $\Th(M)$ is $\d$-computable if and only if $\ulcorner \Th(M)\urcorner$ is $\Pi_1^\d$.\footnote{As above, when writing $\ulcorner \Th(M)\urcorner$, we are identifying $\Th(M)$ with its zeroset.}
%
%
\end{lemma}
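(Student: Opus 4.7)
The plan is to prove the two directions separately. The forward direction is quick; the backward direction is where the real work lies, and the boundary behaviour at $\varphi_k^M\in\{0,1\}$ will be the main obstacle I anticipate.

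For the forward direction, let $A$ be a $\d$-algorithm witnessing $\d$-computability of $\Th(M)$. The key step is to observe that $\varphi_k^M=0$ if and only if, for every $n\in\mathbb N$, both rational endpoints of the interval $A(k,2^{-n-1})$ lie in $[-2^{-n},2^{-n}]$: the ($\Rightarrow$) direction uses that $A(k,2^{-n-1})$ contains $0$ and has width less than $2^{-n-1}$, forcing its endpoints into $(-2^{-n-1},2^{-n-1})\subseteq[-2^{-n},2^{-n}]$; the ($\Leftarrow$) direction uses that $|\varphi_k^M|\le 2^{-n}$ for all $n$ forces $\varphi_k^M=0$. Since checking the inner condition is $\d$-computable, this exhibits $\ulcorner\Th(M)\urcorner$ as $\Pi_1^\d$.

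For the backward direction, suppose $\ulcorner\Th(M)\urcorner$ is $\Pi_1^\d$. The first step is to note that ``$\sigma^M\ne 0$'' is $\d$-semi-decidable uniformly in $\ulcorner\sigma\urcorner$, since its complement is exactly membership in $\ulcorner\Th(M)\urcorner$. The second step is to extend Lemma~\ref{fandg}: every dyadic rational $q\in[0,1]$ is realized as a closed restricted $L$-sentence built from $0,1,\tfrac{x}{2},\dotminus$, so the G\"odel numbers $\ulcorner\varphi_k\dotminus q\urcorner$ and $\ulcorner q\dotminus\varphi_k\urcorner$ are computable uniformly in $k$ and $q$. Combining these with the equivalences $(\varphi_k\dotminus q)^M>0\Leftrightarrow\varphi_k^M>q$ and $(q\dotminus\varphi_k)^M>0\Leftrightarrow\varphi_k^M<q$, I obtain uniformly $\d$-semi-decidable procedures for both strict inequalities ``$\varphi_k^M>q$'' and ``$\varphi_k^M<q$''.

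The final step, on input $(k,\epsilon)$, is to dovetail over all pairs $(a,b)$ of dyadic rationals in $[0,1]$ with $b-a<\epsilon$, running the semi-decision procedures for ``$\varphi_k^M>a$'' and ``$\varphi_k^M<b$'' in parallel, and output $[a,b]$ as soon as some pair succeeds on both sides. The main obstacle is the boundary behaviour when $\varphi_k^M\in\{0,1\}$, because then the relevant strict inequality never becomes positive and the corresponding semi-decision never terminates. I plan to handle this by also admitting candidate pairs of the form $(0,b)$ or $(a,1)$, treating the inequality on the trivial side as vacuously satisfied; termination then holds because any $r\in[0,1]$ admits a viable candidate pair of width less than $\epsilon$.
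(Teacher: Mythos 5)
Your proof is correct and follows essentially the same route as the paper's: the forward direction tests whether the returned intervals shrink onto $0$ (a $\Pi_1^{\d}$ condition over a $\d$-decidable matrix, where the paper instead packages the complement as the domain of a $\d$-computable partial function), and the backward direction enumerates the complement of $\ulcorner \Th(M)\urcorner$ to semi-decide $\varphi_k^M>q$ and $\varphi_k^M<q$ via the sentences $\varphi_k\dotminus q$ and $q\dotminus\varphi_k$ and then dovetails. Your explicit handling of the boundary cases $\varphi_k^M\in\{0,1\}$ (admitting the vacuous endpoints $0$ and $1$) patches a detail the paper's proof glosses over, but it is a refinement of the same argument rather than a different approach.
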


\begin{proof}
First suppose that $\Th(M)$ is $\d$-computable and consider $j:\mathbb N^2\to \mathbb N$ which, upon input $(k,n)$, if $k=\ulcorner \sigma \urcorner$, uses $\d$ to calculate an interval $I_n$ with $|I_n|<\frac{1}{n}$ and with $\sigma^M\in I_n$, and then returns $1$ if $0\in I_n$ and otherwise returns $0$.  (If $k$ is not the G\"odel number of a sentence, set $j(k,n)=0$.)  Note that $j$ is a $\d$-computable function.  Let $h(k)$ be the least $n$ such that $j(k,n)=0$ when such $n$ exists and be undefined otherwise.  Thus, $h$ is a $\d$-computable partial function.  Since $\operatorname{dom}(h)\cap \ulcorner \operatorname{Sent}\urcorner=\ulcorner \operatorname{Sent}\urcorner\setminus \ulcorner \Th(M)\urcorner$, it follows that $\ulcorner \Th(M)\urcorner$ is $\Pi_1^\d$.  (This uses a different reformulation of $\Sigma_1^\d$ sets, namely the domains of $\d$-computable partial functions.)

Conversely, suppose that $\ulcorner \Th(M)\urcorner$ is $\Pi_1^\d$.   Fix $\sigma \in \Sent_L$ and rational $\epsilon>0$.  Using $\d$, one can enumerate $\ulcorner \Sent_L \urcorner \setminus \ulcorner \Th(M)\urcorner$.  Whenever one sees $\ulcorner \sigma\dotminus r\urcorner$ in this enumeration, one knows that $\sigma^M>r$.  Likewise, whenever one sees $\ulcorner s\dotminus \sigma\urcorner$, one knows that $\sigma^M<s$.  Thus, after some amount of time, one will arrive at such a pair $(r,s)$ with $s-r<\epsilon$ and the interval $(r,s)$ contains $\varphi^M$.
\end{proof}

\begin{rmk}
The above proof can easily be modified to show that if $\ulcorner \Th(M)\urcorner$ is $\Sigma_1^\d$, then $\Th(M)$ is $\d$-computable, and, consequently, that $\ulcorner \Th(M)\urcorner$ is actually $\Delta_1^\d$.  It is unclear if one can show that $\Th(M)$ being $\d$-computable implies $\ulcorner \Th(M)\urcorner$ is $\Delta_1^\d$.
\end{rmk}

There is a proof system for continuous logic presented in \cite{completeness}.  There one defines the relation $ T\vdash \sigma$ for $T$ an $L$-theory and $\sigma$ a restricted $L$-sentence.  It follows that if $\ulcorner T\urcorner $ is $\Sigma_1^\d$, then $\{\ulcorner\sigma\urcorner \ : \ T\vdash \sigma\}$ is also $\Sigma_1^\d$.  

We will also need the following form of the Completeness Theorem for continuous logic; here $\mathbb D$ denotes the dyadic rational numbers.

\begin{fact}[Pavelka-Style Completeness \cite{completeness}]
For any $\sigma\in \Sent_L$, one has
$$\sup\{\sigma^M \ : \ M\models\sigma\}=\inf\{r\in \mathbb D^{>0} \ : \ T\vdash \sigma\dotminus r\}.$$
\end{fact}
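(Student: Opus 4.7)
The plan is to prove the two inequalities of the Pavelka equation separately, with soundness giving $\leq$ and a Henkin-style construction giving $\geq$.

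For the soundness direction, I would begin by noting that if $T \vdash \sigma \dotminus r$ with $r \in \mathbb{D}^{>0}$, then $(\sigma \dotminus r)^M = 0$ for every model $M$ of $T$, i.e.\ $\sigma^M \leq r$. Taking the supremum over such $M$ on the left and the infimum over such $r$ on the right immediately yields
\[
\sup\{\sigma^M : M \models T\} \;\leq\; \inf\{r \in \mathbb{D}^{>0} : T \vdash \sigma \dotminus r\}.
\]

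The harder direction $\geq$ requires a Henkin-style construction tailored to continuous logic. I would proceed by contrapositive: fix a dyadic $r$ with $T \not\vdash \sigma \dotminus r$ and build a model $M \models T$ with $\sigma^M \geq r - \varepsilon$ for any prescribed $\varepsilon > 0$. Enrich $L$ by a countable set $C$ of new constants and work in $L(C)$. Build an increasing chain of $L(C)$-theories $T = T_0 \subseteq T_1 \subseteq \cdots$ such that (i) $T_n \not\vdash \sigma \dotminus r'$ for every dyadic $r' < r$, (ii) $T_\infty := \bigcup_n T_n$ decides the value $\tau^M$ of every restricted $L(C)$-sentence $\tau$ to arbitrary dyadic precision, and (iii) every $\inf_x \psi(x)$ sentence has a Henkin witness: for each dyadic $s$ with $T_\infty \not\vdash (\inf_x \psi) \dotminus s$ there is some $c \in C$ with $T_\infty \not\vdash \psi(c) \dotminus s$. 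Preservation of the nonprovability constraint (i) through each extension step follows from the proof-theoretic machinery in \cite{completeness}, using that if one cannot derive $\sigma \dotminus r'$ from $T_n$, then one cannot derive it from $T_n$ together with a suitably chosen Henkin axiom.

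From $T_\infty$ extract a model $M$ as follows: let $M_0$ be the set of closed $L(C)$-terms, equipped with the pseudometric $d(t,t') := \inf\{s \in \mathbb{D}^{>0} : T_\infty \vdash d(t,t') \dotminus s\}$. Interpret function and predicate symbols via representatives; condition (ii) together with the uniform continuity moduli supplied by $L$ guarantees well-definedness and continuity. Let $M$ be the metric completion of the quotient $M_0/\{d=0\}$. A standard induction on formula complexity — with the Henkin witnesses handling the quantifier step — yields $\tau^M = \inf\{s : T_\infty \vdash \tau \dotminus s\}$ for every restricted $L(C)$-sentence $\tau$. In particular $M \models T$ and $\sigma^M \geq r$ arbitrarily close, contradicting our assumption and finishing the argument.

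The principal obstacle is the quantifier step in the induction, where one must verify that the syntactic truth values $\inf\{s : T_\infty \vdash \tau \dotminus s\}$ behave correctly under $\inf_x$; this is where Henkinization interacts with the $\dotminus$ connective and the dyadic approximation scheme. Carefully managing these approximations — and checking that condition (i) is preserved under both the Henkin step and the completion of values step — is the technically delicate heart of the proof, and is worked out in detail in \cite{completeness}.
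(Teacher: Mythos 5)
The paper offers no proof of this statement: it is imported verbatim as a Fact from \cite{completeness}, so there is no internal argument to compare against. Your outline is, in essence, the proof that the cited paper actually gives: soundness yields the inequality $\leq$, and a Henkin-style term-model construction (extend $T$ over new constants to a maximal consistent Henkin theory preserving the non-provability of $\sigma \dotminus r'$ for dyadic $r' < r$, then build the completed quotient of closed terms and prove a truth lemma by induction on formulas) yields $\geq$. The soundness half is complete as written, and your identification of the two delicate points --- the Lindenbaum-type extension step that must preserve non-provability, and the quantifier case of the truth lemma where Henkin witnesses meet the $\dotminus$/dyadic approximation scheme --- is accurate; but since you defer exactly those steps to \cite{completeness}, what you have is a correct roadmap of the standard argument rather than a self-contained proof, which is consistent with (indeed more detailed than) the paper's own treatment. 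One small point worth flagging: the left-hand side as printed reads $\sup\{\sigma^M : M\models\sigma\}$, which is surely a typo for $M\models T$; you silently and correctly work with the latter.
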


%

\noindent The theory $T$ is \textbf{consistent} if there is a sentence $\sigma$ such that $T\not\vdash\sigma$.

\begin{lemma}
$T$ is consistent if and only if $T\not\vdash (1\dotminus \sup_x d(x,x))\dotminus \frac{1}{2}$.
\end{lemma}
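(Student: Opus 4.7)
The $(\Leftarrow)$ direction is immediate from the definition of consistency: if $T \not\vdash \tau$ for $\tau := (1 \dotminus \sup_x d(x,x)) \dotminus \frac{1}{2}$, then $\tau$ itself witnesses that some sentence is not $T$-provable.

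For the $(\Rightarrow)$ direction, I would argue the contrapositive: $T \vdash \tau$ implies $T$ is inconsistent. The key observation is that in any $L$-structure $M$ the metric axiom forces $d(x,x)^M = 0$, hence $(\sup_x d(x,x))^M = 0$ and $\tau^M = (1 \dotminus 0) \dotminus \frac{1}{2} = \frac{1}{2}$ identically. Writing $\tau = \sigma \dotminus \frac{1}{2}$ with $\sigma := 1 \dotminus \sup_x d(x,x)$, I would invoke Pavelka-style completeness applied to $\sigma$, which gives
$$\sup\{\sigma^M : M \models T\} = \inf\{r \in \mathbb{D}^{>0} : T \vdash \sigma \dotminus r\}.$$
The hypothesis $T \vdash \tau$ puts $\frac{1}{2}$ into the set on the right, so its infimum is at most $\frac{1}{2}$; but if $T$ had a model then the left-hand side would equal $\sigma^M = 1$. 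This contradiction shows $T$ has no models.

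It then remains to deduce that a theory with no models proves every sentence. For any $\sigma' \in \Sent_L$, Pavelka completeness applied to $\sigma'$ now involves a vacuous supremum on the left, which forces $T \vdash \sigma' \dotminus r$ for every $r \in \mathbb{D}^{>0}$; the proof calculus of continuous logic then lets one conclude $T \vdash \sigma'$. Thus $T$ proves every sentence and so is inconsistent in the sense defined above, completing the contrapositive.

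The main technical subtlety in the write-up lies in this last step: passing from $T \vdash \sigma' \dotminus r$ for all positive dyadic $r$ to outright $T \vdash \sigma'$, together with interpreting the vacuous supremum side of Pavelka's equation when $T$ has no models. Both of these are handled by the proof system of \cite{completeness}, and the proof here mainly amounts to unwinding those definitions to check that the degenerate semantic side yields the desired syntactic conclusion.
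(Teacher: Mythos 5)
Your backwards direction and the first half of your forwards direction are fine: the observation that $\tau^M=\frac{1}{2}$ in every structure, combined with the Pavelka-style statement applied to $\sigma:=1\dotminus\sup_x d(x,x)$, correctly shows that $T\vdash\tau$ forces $T$ to have no models. The gap is in your last step, where you must convert ``$T$ has no models'' into ``$T$ proves every sentence'' (which is what inconsistency means here). You derive $T\vdash\sigma'\dotminus r$ for every $r\in\mathbb D^{>0}$ from a vacuous supremum and then assert that the proof calculus lets one conclude $T\vdash\sigma'$. That inference is not available in general: the failure of exactly this passage from approximate provability to outright provability is the reason the completeness theorem for continuous logic only holds in Pavelka form, and it is why the paper's definition of \emph{maximal consistent} must impose clause (1)(a) (if $\sigma\dotminus 2^{-n}\in T'$ for all $n$ then $\sigma\in T'$) as an extra closure condition rather than getting it for free from deductive closure. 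There is also a secondary soft spot: reading off $\sup\emptyset=0$ by applying the Pavelka equality to an unsatisfiable theory is not something the stated Fact obviously licenses, since its proof is a Henkin construction that presupposes consistency.

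The paper avoids all of this by running the implication in the uncontested direction: if $T$ is consistent, the model-existence form of the Completeness Theorem gives a model, every model evaluates $1\dotminus\sup_x d(x,x)$ to $1$, so the Pavelka equality forces $\inf\{r\in\mathbb D^{>0} : T\vdash\sigma\dotminus r\}=1>\frac{1}{2}$ and hence $T\not\vdash\tau$. If you want to keep your contrapositive, the clean repair is syntactic rather than semantic: from $T\vdash\tau$ and the provability of $\sup_x d(x,x)$ one gets that $T$ proves a sentence of constant truth value $\frac{1}{2}$, and then two applications of modus ponens (using the validity $(\sigma'\dotminus\frac{1}{2})\dotminus\frac{1}{2}$, which is identically $0$) yield $T\vdash\sigma'$ for arbitrary $\sigma'$, with no appeal to models at all.
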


\begin{proof}
The backwards direction is immediate.  For the forwards direction, if $T$ is consistent, then by the Completeness Theorem, $T$ is satisfiable (that is, has a model), whence $\sup\{(1\dotminus \sup_x d(x,x))^M \ : \ M\models T\}=1$ and thus $T\not\vdash (1\dotminus \sup_x d(x,x))\dotminus \frac{1}{2}$.
\end{proof}

We set $\Phi_{\Theorem}(X,p):=\Phi_{\Sent}(X)\wedge \varphi_{\Sent_L}(p)\wedge T_X\vdash \varphi_p$, an arithmetic formula.  We also set $\Con(X)$ to be the statement $$\neg\Phi_{\Theorem}(X,\ulcorner (1\dotminus \sup_x d(x,x))\dotminus \frac{1}{2}\urcorner).$$ Thus, $\Con(X)$ is an arithmetical formula and, by the previous lemma, $\Con(\ulcorner T\urcorner)$ is true if and only if $T$ is consistent.

\section{Algebras with computable presentations}

\subsection{Presentations of operator algebras}

In this subsection, we present the basic definitions from computable structure theory for metric structures \cite{FMcN} (building upon the work in \cite{CS1},\cite{CS2}, \cite{CS3}, \cite{CS4}, \cite{CS5}) in the context of operator algebras.

Throughout this subsection, $M$ denotes either a separable \cstar-algebra or a separable tracial von Neumann algebra whose unit ball is denoted by $M_1$.  Given $x,y\in M_1$, a \textbf{rounded combination} of $x$ and $y$ is an element of the form $\lambda x+\mu y$, where $\lambda,\mu\in \mathbb C$ satisfy $|\lambda|+|\mu|\leq 1$.  The rounded combination will be called \textbf{rational} if $\lambda$ and $\mu$ belong to $\mathbb Q(i)$.   


\begin{defn}

\

\begin{enumerate}
\item Given $A\subseteq M_1$, we let $\langle A\rangle$ be the smallest subset of $M_1$ containing $A$ and closed under rational rounded combinations, multiplication, and adjoint.\footnote{In order to fit our discussion under the more general presentation found in \cite{FMcN}, we need our operations to be uniformly continuous, whence the need to restriction attention to operator norm bounded balls.}
\item We say that $A$ \textbf{generates} $M$ if $\langle A\rangle$ is dense in $M_1$ (where density is with respect to the operator norm in case $M$ is a \cstar-algebra and with respect to the 2-norm in case $M$ is a tracial von Neumann algebra).
\item A \textbf{presentation} of $M$ is a pair $M^\#:=(M,(a_n)_{n\in \mathbb N})$, where $\{a_n \ : \ n\in \mathbb N\}\subseteq M_1$ generates $M$.  Elements of the sequence $(a_n)_{n\in \mathbb N}$ are referred to as \textbf{special points} of the presentation while elements of $\langle \{a_n \ : \ n\in \mathbb N\}\rangle$ are referred to as \textbf{rational points} of the presentation.
\end{enumerate}
\end{defn}



The following remark is crucial for what follows:

\begin{rmk}
Given a presentation $M^\#$ of $M$, it is possible to computably enumerate the rational points of $M^\#$.\footnote{Technically, one is computably enumerating ``codes'' for rational points.}  Consequently, it makes sense to consider algorithms which take rational points of $M^\#$ as inputs and/or outputs.
\end{rmk}


\begin{defn}
If $M^\#$ is a presentation of $M$ and $\d$ is a Turing degree, then $M^\#$ is a \textbf{$\d$-computable presentation} if there is a $\d$-algorithm such that, upon input rational point $p\in M^\#$ and $k\in \mathbb N$, returns a rational number $q$ such that $|\|p\|-q|<2^{-k}$ in case $M$ is a \cstar-algebra and $|\|p\|_2-q|<2^{-k}$ in case $M$ is a tracial von Neumann algebra.
\end{defn}

%

The following theorem appears in \cite{CGM}:

\begin{thm}\label{presentation}
Suppose that $M$ is a separable metric structure with a $\d$-computable presentation.  We let $M_C$ be the expansion of $M$ to an $L(C)$-structure such that the new constants are interpreted by the elements of the presentation.  Suppose that $n\geq 1$ and $\bowtie \in \{<,\leq,>,\geq\}$.  Set $$X_n^{\bowtie}:=\{(k,r)\in \mathbb N\times \mathbb {Q}^{>0} \ : \ k\in \ulcorner \forall_{2n}\text{-}\Sent_L\urcorner \text{ and }\varphi_k^M\bowtie r\}.$$  Then:
\begin{itemize}
\item $X_n^{\leq}$ is $\Pi_n^\d$.
\item $X_n^{<}$ is $\Sigma_{n+1}^\d$.
\item $X_n^{\geq}$ is $\Pi_{n+1}^\d$.
\item $X_n^{>}$ is $\Sigma_{n}^\d$.
\end{itemize}
\end{thm}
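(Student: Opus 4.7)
The proof is by induction on $n$. To make the induction close, one should prove the statement simultaneously with its dual for $\exists_{2n}$-sentences (those whose prenex normal form starts with $\inf$ and has $n$ alternating blocks), where the roles of $\leq/<$ and $\geq/>$ are swapped: e.g.\ if $\sigma$ is $\exists_{2n}$, then $\sigma^M \leq r$ is $\Pi_{n+1}^\d$ and $\sigma^M \geq r$ is $\Pi_n^\d$.

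The base case $n=1$ relies on the following unpacking of the definition of a $\d$-computable presentation: for any quantifier-free $L(C)$-formula $\varphi(\bar x)$ and tuple $\bar p$ of rational points of $M^\#$, the real $\varphi^M(\bar p)$ is uniformly $\d$-computable in $(\ulcorner\varphi\urcorner,\bar p)$. This is because the rational points are closed under rational rounded combinations, multiplication, and adjoint (so that polynomial expressions at $\bar p$ are themselves rational points whose norms or tracial $2$-norms are $\d$-approximable), and the restricted connectives $\dotminus$, $x/2$, and the constants $0,1$ are Lipschitz and hence preserve $\d$-computability of reals. Consequently, for rational $r$, ``$\varphi^M(\bar p)\leq r$'' is $\Pi_1^\d$ and ``$\varphi^M(\bar p)<r$'' is $\Sigma_1^\d$, with analogous statements for $\geq$ and $>$. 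Combining this with density of rational points in $M_1$ and uniform continuity of $\varphi^M$, one gets $\sup_{\bar x\in M_1^k}\varphi^M(\bar x)=\sup_{\bar p\text{ rat.}}\varphi^M(\bar p)$ and the dual equality for $\inf$. The four complexity bounds for the $n=1$ case then follow from reformulations such as ``$\sup f<r \iff \exists\,\epsilon>0\,\forall\bar p\,\varphi^M(\bar p)\leq r-\epsilon$''.

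For the inductive step, write a $\forall_{2(n+1)}$-sentence as $\sigma=\sup_{\bar x}\sigma'(\bar x)$ with $\sigma'$ an $\exists$-type formula one block shorter; density gives ``$\sigma^M\leq r \iff \forall\bar p\,\sigma'^M(\bar p)\leq r$'', and the dual IH applied to $\sigma'(\bar p)$ in the expanded language supplies the complexity of the inner predicate, while adjoining $\forall\bar p$ over the $\d$-computable enumeration of rational points adds only a universal quantifier at the $\d$-computable level and so preserves the arithmetic class. Symmetrically, for an $\exists$-type sentence $\tau=\inf_{\bar y}\tau'(\bar y)$ one uses ``$\tau^M\leq r \iff \forall\,\epsilon>0\,\exists\bar q\,\tau'^M(\bar q)<r+\epsilon$'', and the new $\forall\epsilon\exists\bar q$ pair together with the IH-inherited $\Sigma_{n+1}^\d$ bound on ``$\tau'^M(\bar q)<r+\epsilon$'' produces exactly the predicted $\Pi_{n+2}^\d$ bound. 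The main obstacle is tracking the four inequality variants simultaneously: the asymmetry between the $\sup$ reformulation (which contributes only a $\forall$, preserving complexity) and the $\inf$ reformulation (which contributes $\forall\epsilon\exists$, raising complexity by one) is exactly what produces the off-by-one gap between the $\Pi_n^\d$ bound for ``$\leq$'' and the $\Pi_{n+1}^\d$ bound for ``$\geq$'' in the theorem statement.
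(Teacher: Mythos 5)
Two preliminary remarks. First, the paper offers no proof of this theorem: it is imported from \cite{CGM}, a manuscript in preparation, so there is no in-paper argument to measure yours against. Second, your overall strategy is certainly the natural one and almost surely the intended one: replace each continuous quantifier by a number quantifier over the computable enumeration of rational points (using density of the rational points in the unit ball together with uniform continuity of the interpretations of restricted formulae), prove the four bounds simultaneously with their duals for $\inf$-leading sentences, and track which inequalities force an extra $\epsilon$-quantifier. Your bookkeeping is internally consistent: stripping an outer $\sup$ is free for $\leq$ and $>$ but costs one level for $\geq$ and $<$, and dually for $\inf$, which is exactly where the $\Pi_n^{\d}$ versus $\Pi_{n+1}^{\d}$ asymmetry comes from.

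The genuine issue is the indexing, which you resolve by fiat rather than by argument. You parenthetically redefine $\exists_{2n}$ (and implicitly $\forall_{2n}$) to mean ``$n$ alternating quantifier blocks,'' and with that convention your induction does produce exactly the four stated bounds. But the paper says $\forall_n$ and $\exists_n$ are used ``as in the classical case,'' under which a $\forall_{2n}$-sentence has $2n$ alternating blocks. Running your own induction on a $2n$-block sentence yields $\Pi_{2n}^{\d}$, $\Sigma_{2n}^{\d}$, $\Pi_{2n+1}^{\d}$, $\Sigma_{2n+1}^{\d}$ for $\leq$, $>$, $\geq$, $<$ respectively, not the stated levels; and the stated levels are not merely missed by your method but unattainable for genuine $2n$-block sentences. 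For instance, take $M=\mathbb N$ with the $\{0,1\}$-valued metric, constants naming its points, and a computable $\{0,1\}$-valued ternary predicate $P$: the two-block sentences $\sup_x\inf_y P(c_m,x,y)$ take the value $0$ exactly on a $\Pi_2^0$-complete set of $m$, so $X_1^{\leq}$ is already not $\Pi_1^0$ under the classical reading. What you have, then, is a correct proof of a correct theorem about sentences with $n$ alternating blocks; you must say explicitly that this is the class you treat and that the subscript $2n$ cannot be read classically, rather than letting a parenthetical definition absorb the discrepancy. Your base case reflects the same slippage: you verify the $n=1$ bounds for a single $\sup$ block, which is the base case only under your convention. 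A minor further point: your justification that quantifier-free values at rational points are uniformly $\d$-computable is phrased for operator algebras (rounded combinations, adjoints, traces), while the theorem concerns arbitrary separable metric structures; in that generality the claim is essentially the content of the definition of a $\d$-computable presentation in \cite{FMcN} and should be attributed as such.
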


%

\begin{cor}\label{upperbound}
If $M$ has a $\d$-computable presentation, then $\Th(M)\leq_T \d^{(\omega)}$.
\end{cor}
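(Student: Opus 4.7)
The plan is to produce a $\d^{(\omega)}$-algorithm which, upon input $k \in \ulcorner \Sent_L\urcorner$ and rational $\e > 0$, returns a rational interval of length less than $\e$ containing $\varphi_k^M$; by definition this will witness $\Th(M) \leq_T \d^{(\omega)}$. The first step is to reduce to the prenex case addressed by Theorem \ref{presentation}: every restricted $L$-sentence is equivalent (in every $L$-structure) to a prenex sentence, and by padding with vacuous $\sup_x$ and $\inf_x$ quantifiers one may computably produce, from $k \in \ulcorner \Sent_L\urcorner$, an integer $n \geq 1$ and $k' \in \ulcorner \forall_{2n}\text{-}\Sent_L \urcorner$ with $\varphi_{k'}^M = \varphi_k^M$ in every $L$-structure $M$.

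Next I would invoke Theorem \ref{presentation} together with Post's Theorem. For the $n$ produced above, the sets $X_n^<$ and $X_n^>$ are $\Sigma_{n+1}^\d$ and $\Sigma_n^\d$ respectively, hence both $\Sigma_1^{\d^{(n)}}$, and thus uniformly c.e. using a $\d^{(\omega)}$-oracle. The algorithm then dovetails enumerations of $\{r \in \mathbb Q : (k',r) \in X_n^<\}$ (strict upper bounds for $\varphi_{k'}^M$) and $\{s \in \mathbb Q : (k',s) \in X_n^>\}$ (strict lower bounds), halting as soon as it finds such $r$ and $s$ with $r - s < \e$ and returning the interval $(s,r)$. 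This halts because $\varphi_{k'}^M$ is a real number: every rational strictly above it eventually appears in the enumeration of upper bounds, every rational strictly below it eventually appears in the enumeration of lower bounds, and such rationals may be chosen within $\e/2$ of $\varphi_{k'}^M$ on either side.

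The main subtleties, neither of which I expect to be serious, are (i) checking that the prenex-and-pad transformation on restricted continuous formulae is computable and value-preserving, and (ii) verifying the uniformity in $n$ needed to have a single $\d^{(\omega)}$-algorithm simulate the various $\d^{(n+1)}$-oracles used to enumerate $X_n^{<}$ and $X_n^{>}$. The former is routine once one observes that $\dotminus$ and $x \mapsto x/2$ interact well with $\sup$ and $\inf$ via the analogues of the usual quantifier-pushing identities; the latter follows from the standard fact that $\d^{(\omega)}$ uniformly computes every $\d^{(n)}$, so the choice of $n$ produced in the first step poses no obstruction to assembling a single algorithm.
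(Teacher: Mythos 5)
Your argument is correct and is exactly the derivation the paper leaves implicit: Corollary \ref{upperbound} is stated with no proof as an immediate consequence of Theorem \ref{presentation}, obtained precisely as you describe by prenexing and padding to reach $\ulcorner \forall_{2n}\text{-}\Sent_L\urcorner$, applying Post's Theorem to see that $X_n^{<}$ and $X_n^{>}$ are $\Sigma_1$ in $\d^{(n)}$ and hence uniformly enumerable from $\d^{(\omega)}$, and dovetailing the two enumerations of strict upper and lower bounds. The one small repair: since $X_n^{>}$ ranges only over \emph{positive} rationals, a sentence with $\varphi_{k'}^M=0$ would never receive a strict lower bound from that set and your algorithm would not halt, so you should use the fact that restricted sentences take values in $[0,1]$ and admit every nonpositive rational as an automatic strict lower bound (after which the halting argument goes through verbatim).
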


\subsection{Theories of group operator algebras}

We now apply the previous corollary to group operator algebras.  Fix a finitely generated group $\Gamma$, say generated by the finite set $X$.  We let $\mathbb F_X$ denote the free group on the alphabet $X$ and we fix an effective enumeration $(g_n)$ of the group algebra $\mathbb Q(i)\mathbb F_X$ and let $\pi:\mathbb Q(i)\mathbb F_X\to \mathbb Q(i)\Gamma$ denote the canonical surjection.

We first consider the case of the universal group \cstar-algebra $C^*(\Gamma)$.  Following \cite{canyou}, we let $\|\cdot\|_u$ denote the operator norm on $\mathcal B(\mathcal H_u)$, where $\Gamma\hookrightarrow \mathcal U(\mathcal H_u)$ is the universal representation of $\Gamma$.  Let $B:=\{g_n \ : \ \|\pi(g_n)\|_u\leq 1\}$ and let this be enumerated as $g_{n_k}$.  Since the operators $\pi(g_{n_k})$ are dense in the unit ball of $C^*(\Gamma)$), we have a presentation of $C^*(\Gamma)$ which we refer to as the \emph{standard presentation}.  (This technically depends on the choice of finite generating set and the effective enumeration, but that will not affect what is to follow.)

We will need the following, which is \cite[Corollary 2.2]{canyou}\footnote{This is technically done for $\mathbb Z\Gamma$, but the same proof works for $\mathbb Q(i)\Gamma$.}:

\begin{fact}
Suppose that $\Gamma$ is finitely presented.  Then there is an algorithm which, upon input $n$, computes a decreasing sequence of rational numbers that converges to $\|\pi(g_n)\|_u$.
\end{fact}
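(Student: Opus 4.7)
The plan is to produce the decreasing rational approximation by enumerating algebraic certificates of positivity in $C^*(\Gamma)$ coming from an Archimedean Positivstellensatz, then taking running minima over the rational upper bounds those certificates yield.

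Write $\Gamma = \langle X \mid R_1, \ldots, R_m\rangle$ and let $J \subseteq \mathbb{Q}(i)[\mathbb{F}_X]$ denote the two-sided $*$-ideal generated by $\{R_l - 1 : 1 \leq l \leq m\}$; write $\Sigma^2$ for finite sums of elements of the form $a^* a$ with $a \in \mathbb{Q}(i)[\mathbb{F}_X]$. By the universal property of $C^*(\Gamma)$, $\|\pi(g_n)\|_u \leq s$ is equivalent to positivity of $s^2\cdot 1 - g_n^* g_n$ in $C^*(\Gamma)$. Since $\mathbb{Q}(i)[\mathbb{F}_X]$ is generated by formal unitaries, its enveloping C*-seminorm is Archimedean, and one has the following Positivstellensatz: a self-adjoint $h \in \mathbb{Q}(i)[\mathbb{F}_X]$ whose image in $C^*(\Gamma)$ is strictly positive lies in $\Sigma^2 + J$. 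Consequently, for every rational $s > \|\pi(g_n)\|_u$ there is a finite identity
$$s^2 - g_n^* g_n \;=\; \sum_i a_i^* a_i \;+\; \sum_j \bigl(b_{j,1}(R_{l_j}-1)b_{j,2} + b_{j,2}^*(R_{l_j}-1)^* b_{j,1}^*\bigr)$$
in $\mathbb{Q}(i)[\mathbb{F}_X]$, with finitely many $a_i, b_{j,1}, b_{j,2} \in \mathbb{Q}(i)[\mathbb{F}_X]$ and each $l_j \in \{1,\ldots,m\}$.

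Such a certificate is a finite combinatorial object, and verifying it reduces to comparing two finite formal sums of words with rational coefficients, which is decidable. The algorithm enumerates all such certificates over all positive rationals $s$ and all finite collections of rational coefficients and word lengths, and at stage $t$ outputs $\min(c, s_1, \ldots, s_t)$, where $s_1, \ldots, s_t$ are the certificate parameters seen so far and $c := \sum_i |\lambda_i|$ (writing $g_n = \sum_i \lambda_i w_i$) is the trivial $\ell^1$-bound, which majorizes $\|\pi(g_n)\|_u$ in any unitary representation. The output is a decreasing sequence of rationals, and by the Positivstellensatz every rational $s > \|\pi(g_n)\|_u$ is eventually produced as some $s_t$, so the sequence converges down to $\|\pi(g_n)\|_u$.

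The main obstacle is the Positivstellensatz itself; once that algebraic input is accepted, the computability layer is entirely routine. This is also precisely the point at which finite presentability enters: without an explicit finite list $R_1, \ldots, R_m$ of defining relations, the ideal $J$ is not effectively available and the enumeration of certificates cannot be carried out.
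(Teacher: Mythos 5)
The paper does not prove this statement itself---it is quoted verbatim as \cite[Corollary 2.2]{canyou}---and your argument is essentially the one given in that reference: upper semicomputability of $\|\pi(g_n)\|_u$ by enumerating sums-of-hermitian-squares-plus-ideal certificates, with the Archimedean Positivstellensatz (valid because the group algebra is spanned by unitaries, so that $1-\frac{1}{2}(u+u^*)=\frac{1}{2}(1-u)^*(1-u)$ makes the quadratic module Archimedean) guaranteeing that every rational $s>\|\pi(g_n)\|_u$ eventually yields a verifiable certificate. So your proposal is correct and follows essentially the same route as the cited proof, with the Positivstellensatz correctly identified as the one nontrivial input.
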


\begin{thm}
Suppose that $\Gamma$ is a finitely presented group.  Then the standard presentation of $C^*(\Gamma)$ is $\mathbf 0'$-computable.  Consequently, $\Th(C^*(\Gamma))\leq_T \mathbf 0^{(\omega)}$.
\end{thm}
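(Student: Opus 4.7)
The plan is to verify that the standard presentation of $C^*(\Gamma)$ fits the definition of a $\mathbf 0'$-computable presentation; the second assertion then follows from Corollary \ref{upperbound} together with the identity $(\mathbf 0')^{(\omega)} \equiv_T \mathbf 0^{(\omega)}$.

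First I would observe that every rational point $p$ of the standard presentation is, by construction, obtained from the special points $\pi(g_{n_k})$ by iterating rational rounded combinations, multiplication, and adjoint. Because $(g_n)$ is an effective enumeration of $\mathbb{Q}(i)\mathbb{F}_X$ and these operations are all computable on $\mathbb{Q}(i)\mathbb{F}_X$, one can read off from the syntactic description of $p$ an index $n$ with $p = \pi(g_n)$. Computing $\|p\|_u$ thus reduces to computing $\|\pi(g_n)\|_u$ for an explicit $n$.

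Next, by the Fact quoted from \cite{canyou}, there is a computable procedure producing, for each $n$, a decreasing rational sequence $(q_m^n)_m$ with $q_m^n \searrow \|\pi(g_n)\|_u$. The predicate ``$\|\pi(g_n)\|_u < r$'' is therefore uniformly $\Sigma_1^0$ in $(n,r)$—witnessed by $\exists m:\ q_m^n < r$—and hence decidable from $\mathbf 0'$. Since $p \in M_1$ forces $\|p\|_u \in [0,1]$, a $\mathbf 0'$-oracle sweep over the dyadic grid $\{j \cdot 2^{-k-1} : 0 \le j \le 2^{k+1}\}$ locates an interval of length $2^{-k-1}$ containing $\|p\|_u$, and its midpoint is the desired rational $2^{-k}$-approximation to $\|p\|_u$.

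The one delicate point I expect to be the real obstacle is that the special-point sequence $(g_{n_k})$ must itself be produced $\mathbf 0'$-effectively, whereas the raw condition ``$\|\pi(g_n)\|_u \le 1$'' is only $\Pi_2^0$ a priori. I would handle this by replacing that condition with the strict inequality ``$\|\pi(g_n)\|_u < 1$'', which is $\Sigma_1^0$ by the preceding paragraph and still yields rational points dense in $M_1$ (the open unit ball is norm-dense in the closed unit ball); alternatively, rescaling each $\pi(g_n)$ by one of its upper rational approximations $q_m^n$ produces a manifestly computable family of elements of norm at most $1$ generating the same unit ball. Either remedy delivers a $\mathbf 0'$-computable presentation, and Corollary \ref{upperbound} then closes the argument.
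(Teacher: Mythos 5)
Your proposal is correct and follows the same route as the paper: reduce everything to $\mathbf 0'$-computing $\|\pi(g_n)\|_u$ via the Fritz--Netzer--Thom fact, and then invoke Corollary \ref{upperbound} together with $(\mathbf 0')^{(\omega)}\equiv_T\mathbf 0^{(\omega)}$. The one place you go beyond the paper is precisely the ``delicate point'' you flag, and you are right to flag it: the paper's proof simply asserts that one can use $\mathbf 0'$ to determine $n_k$, but from a decreasing rational sequence converging to $\|\pi(g_n)\|_u$ the predicate $\|\pi(g_n)\|_u\le 1$ defining $B$ is only $\Pi_2^0$ a priori (its negation $\|\pi(g_n)\|_u>1$ is $\Sigma_2^0$), whereas only the predicates $\|\pi(g_n)\|_u<r$ (which are $\Sigma_1^0$) and $\|\pi(g_n)\|_u\ge r$ (which are $\Pi_1^0$) are manifestly $\mathbf 0'$-decidable. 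Your two remedies --- passing to the $\Sigma_1^0$ set $\{n:\|\pi(g_n)\|_u<1\}$, whose images remain dense in the closed unit ball and closed under the generating operations, or rescaling by rational upper approximants --- both work, at the harmless cost of replacing the standard presentation as literally defined by an equally good $\mathbf 0'$-computable presentation of the same algebra; that is all that is needed for the conclusion $\Th(C^*(\Gamma))\le_T\mathbf 0^{(\omega)}$. (One could instead keep the standard presentation and observe that $B$ is decidable from $\mathbf 0''$, which still gives the theory bound since $(\mathbf 0'')^{(\omega)}\equiv_T\mathbf 0^{(\omega)}$.) The remaining steps --- reading off from the syntactic description of a rational point an index $n$ with $p=\pi(g_n)$, and the $\mathbf 0'$-assisted dyadic sweep bracketing the norm --- match what the paper does.
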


\begin{proof}
Since the sequence $(g_{n_k})$ is closed under all the symbols, it suffices to show that one can compute $\|\pi(g_{n_k})\|_u$ uniformly in $k$ from $\mathbf 0'$.  Given $k$, by the previous fact, one can use $\mathbf 0'$ to determine $n_k$ and then one can use the previous fact and $\mathbf 0'$ again to compute $\|\pi(g_{n_k})\|_u$.
\end{proof}

We now turn to the reduced group \cstar-algebra $C^*_\lambda(\Gamma)$.  This time, we let $\|\cdot\|_\lambda$ denote the operator norm on $\mathcal B(\ell^2(\Gamma))$, where $\Gamma\hookrightarrow \mathcal U(\ell^2(\Gamma))$ is the left-regular representation.  We now set $B:=\{g_n \ : \ \|\pi(g_n)\|_\lambda\leq 1\}$ and once again let this be enumerated as $g_{n_k}$.  Since the operators $\pi(g_{n_k})$ are dense in the unit ball of $C^*_\lambda(\Gamma)$ (), we have a presentation of $C^*_\lambda(\Gamma)$ which we refer to as the \emph{standard presentation}.

We let $\tau$ denote the canonical trace on $\mathbb C\Gamma$.  We will need the following fact \cite[Lemma 3.1]{canyou}:

\begin{fact}
For any $a\in \mathbb C\Gamma$, we have $\|a\|_\lambda=\sup\{\tau((a^*a)^n)^{1/2n} \ : \ n\in \mathbb N\}$.
\end{fact}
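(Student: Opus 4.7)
The plan is to reduce the problem to a spectral/measure-theoretic statement about a single positive element and then apply the standard $L^p$-to-$L^\infty$ argument. Note that $\|a\|_\lambda^2 = \|a^*a\|_\lambda$ by the C*-identity, so writing $b := a^*a \geq 0$, it suffices to prove that $\|b\|_\lambda = \sup_n \tau(b^n)^{1/n}$ and then take square roots. The element $b$ is a positive element of $C^*_\lambda(\Gamma)$.

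The central object I would use is the spectral distribution of $b$ under the canonical trace. Since $\tau$ extends to a faithful tracial state on $C^*_\lambda(\Gamma)$ (a standard fact: the regular representation is used precisely to make the canonical trace faithful on the reduced algebra), there is a Borel probability measure $\mu$ on the spectrum $\sigma(b) \subseteq [0,\|b\|_\lambda]$ such that $\tau(f(b)) = \int f \, d\mu$ for every continuous $f$. In particular, $\tau(b^n) = \int t^n \, d\mu(t)$, so $\tau(b^n)^{1/n}$ is exactly the $L^n(\mu)$-norm of the coordinate function $t \mapsto t$.

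Now two standard observations finish the argument. First, since $\mu$ is a probability measure, the $L^n$-norms $\|t\|_{L^n(\mu)}$ are monotonically nondecreasing in $n$ (power-mean inequality, equivalently Jensen applied to $x \mapsto x^{n/m}$ for $n \geq m$). Second, $\|t\|_{L^n(\mu)} \to \|t\|_{L^\infty(\mu)}$ as $n \to \infty$, and this $L^\infty$-norm equals $\|b\|_\lambda$: the easy inequality $\|t\|_{L^\infty(\mu)} \leq \|b\|_\lambda$ is clear from $\operatorname{supp}(\mu) \subseteq [0,\|b\|_\lambda]$, and the reverse uses faithfulness of $\tau$, which forces $\mu$ to charge every open neighborhood of any point of $\sigma(b)$ and in particular of the top of the spectrum. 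Consequently the sequence $\tau(b^n)^{1/n}$ increases to $\|b\|_\lambda$, which makes the sup and the limit agree and yields the desired identity.

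The only step that requires genuine care is the bridging fact that $\operatorname{supp}(\mu)$ contains $\|b\|_\lambda$; this is where faithfulness of the canonical trace on $C^*_\lambda(\Gamma)$ enters in an essential way, and it is precisely the feature that distinguishes the reduced C*-algebra from $C^*(\Gamma)$ (where the analogous identity would fail). Everything else is either the C*-identity, continuous functional calculus, or classical real analysis of $L^p$-norms on a probability space.
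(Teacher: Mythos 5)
The paper does not actually prove this statement---it is quoted verbatim as \cite[Lemma 3.1]{canyou}---so there is no internal proof to compare against; judged on its own, your argument is correct and is essentially the standard proof given in that reference: reduce to $b=a^*a$ via the C*-identity, pass to the spectral measure of $b$ induced by the canonical trace, and use faithfulness of $\tau$ on $C^*_\lambda(\Gamma)$ to see that this measure has full support in $\sigma(b)$, so that the increasing $L^n$-norms $\tau(b^n)^{1/n}$ converge up to $\|b\|_\lambda$. The one step you leave implicit, that faithfulness forces full support, is the routine Urysohn-plus-functional-calculus observation, so nothing essential is missing.
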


\begin{thm}
Suppose that $\Gamma$ is a finitely generated group with word problem solvable from the Turing degree $\d$.  Then the standard presentation of $C^*_\lambda(\Gamma)$ is $\d'$-computable.  Consequently, $\Th(C^*_\lambda(\Gamma))\leq_T \d^{(\omega)}$.  In particular, if $\Gamma$ has solvable (or even arithmetic) word problem, then $\Th(C^*_\lambda(\Gamma))\leq_T \mathbf 0^{(\omega)}$.
\end{thm}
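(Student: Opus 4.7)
The plan is to mirror the proof just given for $C^*(\Gamma)$: I will show that the standard presentation of $C^*_\lambda(\Gamma)$ is $\d'$-computable, and then apply Corollary~\ref{upperbound} to conclude $\Th(C^*_\lambda(\Gamma))\leq_T (\d')^{(\omega)}\equiv_T \d^{(\omega)}$. Since the rational points of the presentation arise by symbolic closure of the $g_{n_k}$ inside $\mathbb Q(i)\mathbb F_X$, and these symbolic operations are effective, it suffices, exactly as in the previous proof, to exhibit a $\d'$-algorithm that, given $a\in \mathbb Q(i)\mathbb F_X$ and $k$, returns a rational within $2^{-k}$ of $\|\pi(a)\|_\lambda$, uniformly in $(a,k)$.

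The central input is the Fact displayed above: $\|\pi(a)\|_\lambda=\sup_n \tau((\pi(a)^*\pi(a))^n)^{1/2n}$. The first step is to observe that the canonical trace on $\mathbb C\Gamma$ pulls back computably to $\mathbb Q(i)\mathbb F_X$ from the word problem: for $b=\sum_i c_i w_i\in \mathbb Q(i)\mathbb F_X$ one has $\tau(\pi(b))=\sum_{i\,:\,\pi(w_i)=e}c_i$, which is $\d$-computable since the word problem of $\Gamma$ is $\d$-decidable. Multiplying and taking adjoints in $\mathbb Q(i)\mathbb F_X$ is purely symbolic, so the approximants $s_n(a):=\tau((\pi(a)^*\pi(a))^n)^{1/2n}$ are $\d$-computable uniformly in $(a,n)$, and $\|\pi(a)\|_\lambda=\sup_n s_n(a)$. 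Consequently the predicate $\|\pi(a)\|_\lambda>q$ is $\Sigma_1^\d$, and its negation $\|\pi(a)\|_\lambda\le q$ is $\Pi_1^\d$, hence $\d'$-decidable, uniformly in $(a,q)$. Combined with the trivial upper bound $\|\pi(a)\|_\lambda\le\sum_i|c_i|$, a binary search over dyadic rationals extracts $2^{-k}$-approximations to $\|\pi(a)\|_\lambda$ from a $\d'$-oracle. The same $\Pi_1^\d$ observation shows that the set $B=\{g_n\,:\,\|\pi(g_n)\|_\lambda\le 1\}$ is $\d'$-enumerable, so the enumeration $(g_{n_k})$ of special points is $\d'$-computable as well.

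The only real subtlety compared with the $C^*(\Gamma)$ case is that here the norm appears as a supremum of a $\d$-computable sequence (a left-c.e.\ real relative to $\d$) rather than as an infimum (a right-c.e.\ real); but this is precisely the change in polarity that the jump absorbs, since $\d'$ decides $\Pi_1^\d$ predicates. Once the standard presentation is shown to be $\d'$-computable, Corollary~\ref{upperbound} yields $\Th(C^*_\lambda(\Gamma))\le_T (\d')^{(\omega)}\equiv_T \d^{(\omega)}$. The ``in particular'' clause follows because if the word problem is arithmetic, say $\d\le_T \mathbf 0^{(n)}$ for some $n$, then $\d^{(\omega)}\le_T (\mathbf 0^{(n)})^{(\omega)}\equiv_T \mathbf 0^{(\omega)}$.
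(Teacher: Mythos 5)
Your proposal is correct and follows essentially the same route as the paper: use the $\d$-decidable word problem to compute the canonical trace on $\mathbb Q(i)\mathbb F_X$, invoke the formula $\|a\|_\lambda=\sup_n\tau((a^*a)^n)^{1/2n}$ to see that both membership in $B$ and the norm values are $\Pi_1^\d$-approximable and hence $\d'$-computable, and then apply Corollary~\ref{upperbound}. The paper's proof is just a two-sentence version of this; your additional details (the explicit trace formula, the left-c.e.\ polarity remark, and the verification that $(\d')^{(\omega)}\equiv_T\d^{(\omega)}$) are all accurate.
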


\begin{proof}
By the assumption on the solvability of the word problem, the function $n\mapsto \tau(\pi(g_n)):\mathbb N\to \mathbb Q$ can be calculated using $\mathbf d$.  Consequently, using the previous fact, we once again see that $n_k$ can be computed uniformly in $k$ from $\mathbf d'$ and that $\|\pi(g_{n_k})\|_\lambda$ can be computed from $\d'$.
\end{proof}


We finally mention the corresponding fact for $L(\Gamma)$.  The only wrinkle here is that we need to use $\|\cdot\|_\lambda$ to determine that elements belong to the operator norm unit ball but then use $\tau$ to compute the 2-norm $\|\cdot\|_{2,\tau}$.  Nevertheless, the above arguments show:  

\begin{thm}
Suppose that $\Gamma$ is a finitely generated group with word problem solvable from the Turing degree $\d$.  Then the standard presentation\footnote{Defined exactly as for $C^*_\lambda(\Gamma).$} of $L(\Gamma)$ is $\d'$-computable.  Consequently, $\Th(L(\Gamma))\leq_T \d^{(\omega)}$.  In particular, if $\Gamma$ has solvable (even arithmetic) word problem, then $\Th(L(\Gamma))\leq_T \mathbf 0^{(\omega)}$.
\end{thm}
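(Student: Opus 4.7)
The plan is to mirror the argument for $C^*_\lambda(\Gamma)$ given just above, noting that the $\d$-computable presentation condition for a tracial von Neumann algebra calls for approximating $\|\cdot\|_{2,\tau}$ on rational points rather than the operator norm. Crucially, the standard presentation of $L(\Gamma)$ shares the same sequence of special points $\pi(g_{n_k})$ as that of $C^*_\lambda(\Gamma)$: membership in the operator norm unit ball is still determined by $\|\cdot\|_\lambda$, even though density (i.e., generation) is measured in the 2-norm.

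First, I would invoke the argument from the preceding theorem to see that the enumeration $k\mapsto n_k$ is $\d'$-computable. The relevant condition $\|\pi(g_n)\|_\lambda\leq 1$ unfolds as the $\Pi_1^\d$ statement that $\tau((\pi(g_n)^*\pi(g_n))^m)\leq 1$ for every $m$, using that $n\mapsto \tau(\pi(g_n))$ is $\d$-computable from the solvability of the word problem from $\d$.

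Next, I would verify that computing $\|\cdot\|_{2,\tau}$ on rational points incurs no complexity beyond $\d'$. Any rational point $p$ is built from the $\pi(g_{n_k})$'s via rational rounded combinations, products, and adjoints, and so, once we identify the indices $n_k$ that appear (which costs $\d'$), we can expand $p$ explicitly as an element of $\mathbb Q(i)\Gamma$. Then $\|p\|_{2,\tau}^2=\tau(p^*p)$ is an exactly computable rational number from $\d$, and a $2^{-k}$-approximation to its square root is produced by a standard computable procedure. Combining the two steps, the standard presentation of $L(\Gamma)$ is $\d'$-computable.

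The conclusion $\Th(L(\Gamma))\leq_T \d^{(\omega)}$ then follows from Corollary \ref{upperbound} together with the identity $(\d')^{(\omega)}=\d^{(\omega)}$. There is no substantial obstacle beyond careful bookkeeping: the $\d'$ cost arises solely at the level of deciding unit-ball membership in the operator norm, while the 2-norm evaluation, once the special points are in hand, reduces directly to arithmetic in $\mathbb Q(i)\Gamma$ and hence to the word problem.
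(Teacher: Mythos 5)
Your proposal is correct and follows essentially the same route as the paper, which likewise notes that the only wrinkle over the $C^*_\lambda(\Gamma)$ case is using $\|\cdot\|_\lambda$ (via the $\Pi_1^\d$, hence $\d'$-decidable, condition $\tau((a^*a)^m)\leq 1$ for all $m$) to identify the special points, and then the trace to evaluate $\|\cdot\|_{2,\tau}$ on rational points. Your observation that the 2-norm evaluation itself costs only $\d$ once the indices $n_k$ are known is a correct and slightly more explicit accounting than the paper's one-sentence sketch.
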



\begin{rmk}
When $\Gamma$ is an amenable, ICC group (whence $L(\Gamma)\cong \R$) with solvable word problem, the previous theorem implies that $\R$ has a $\mathbf 0'$-computable presentation.  However, in \cite{GoldHart}, we claimed that $\R$ has a computable presentation.  Since no proof was offered there, we say a few words here.

Indeed, one can effectively enumerate $\bigcup_n M_n(\mathbb Q(i))$ as $(A_n)$.  Since, given a matrix $A$, one has $\|A\|^2=\|A^*A\|$ and the latter equals the largest eigenvalue of $A^*A$, using familiar algorithms from numerical analysis, one can compute, given $n$, a sequence $p_{m,n}$ of rational upper bounds for $\|A_n\|$.  By letting $B_{m,n}:=\frac{1}{p_{m,n}}A_n$, one obtains an effectively enumerated dense sequence from the unit ball of $\R$.  It is then routine to effectively calculate the 2-norm $\|\cdot\|_2$ of $B_{m,n}$ using the normalized trace on matrix algebras. 
\end{rmk}

\begin{rmk}
As a consequence of the fact that $\R$ has a computable presentation, Corollary \ref{upperbound} implies that the universal theory of $\R$ can be computed from $\mathbf 0'$.  In \cite{notcomputable}, the current authors show that the universal theory of $\R$ is not computable and in fact $\mathbf 0'$ can be computed from the universal theory of $\R$.
\end{rmk}

It is not clear to us whether some fundamental \cstar-algebras such as the Cuntz algebra $\mathcal O_2$ have computable (or even hyperarithmetic) presentations.  Thus, in order to deduce that such an algebra has a hyperarithmetic theory, we need a new technique, which is the content of the next section.

%


\section{Finite forcing companions}

\subsection{Model-theoretic forcing:  a recap}


In this section, we summarize the relevant background on building models by games in the continuous setting from \cite{Gold}.

We fix an $L$-theory $T$.  A \textbf{pre-condition} is a finite set $p$ of expressions of the form $\varphi<r$, where $\varphi\in \Sent_{L(C)}$ is quantifier-free and $r$ is a positive rational number.  A pre-condition $p$ is a \textbf{condition} (for $T$) if $T\cup p$ is satisfiable.

We consider a two-person game involving the players $\forall$ and $\exists$.  Players $\forall$ and $\exists$ take turns playing conditions extending the previous players move.  Thus, $\forall$ starts by playing the condition $p_0$, whence $\exists$ follows up by playing the condition $p_1\supseteq p_0$, and then $\forall$ follows that play with some condition $p_2\supseteq p_1$, etc... After $\omega$ many steps, the two players have together played a chain $p_0\subseteq p_1\subseteq p_2\subseteq\cdots$ of conditions whose union we will denote by $\bar p$.  

We call the above play \textbf{definitive} if, for every atomic $L(C)$-sentence $\varphi$, there is a unique $r\in [0,1]$ such that $T\cup \bar p \models \varphi=r$.  In this case, $\bar p$ describes an $L(C)$-prestructure $A_0^+(\bar p)$ whose completion will be denoted by $A^+(\bar p)$ and will be referred to as the \textbf{compiled structure}.  The reduct of $A^+(\bar p)$ to $L$ will be denoted by $A(\bar p)$.  If $\bar p$ is clear from context, we will denote $A^+(\bar p)$ and $A(\bar p)$ simply by $A^+$ and $A$ respectively.

Note that, regardless of player $\forall$'s moves, player $\exists$ can always ensure that the play of the game is definitive.

\begin{defn}
Let $P$ be a property of $L(C)$-structures.  The game $G(P)$ is the game whose moves are as above and such that Player $\exists$ wins $G(P)$ if and only if $\bar p$ is definitive and $A^+(\bar p)$ has property $P$.  We say that $P$ is \textbf{enforceable} if Player $\exists$ has a winning strategy in $G(P)$.  We say that the condition $p$ \textbf{forces} $P$, denoted $p\Vdash P$, if, for any position $(p_0,\ldots,p_k)$  of the game $G(P)$, if $p\subseteq p_k$, then the position is winning for $\exists$ in $G(P)$.  We write $\Vdash P$ when $\emptyset\Vdash p$.
\end{defn}

\begin{fact}[Conjunction Lemma]
If $p$ forces $P_i$ for each $i<\omega$, then $p$ forces the conjunction $\bigwedge_{i<\omega} P_i$..
\end{fact}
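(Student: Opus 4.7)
The plan is to imitate the classical Conjunction Lemma via a dovetailing argument. Assume $p \Vdash P_i$ for each $i < \omega$. To show $p \Vdash \bigwedge_{i<\omega} P_i$, I would fix an arbitrary position $(p_0, \ldots, p_k)$ of the game $G(\bigwedge_{i<\omega} P_i)$ with $p \subseteq p_k$ and produce a winning strategy for $\exists$ from that position.

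First I would fix an enumeration $(i_n)_{n<\omega}$ of $\omega$ in which every natural number appears infinitely often, and set $N_i := \{n : i_n = i\}$. I would then define a strategy $\sigma$ for $\exists$ in $G(\bigwedge_{i<\omega} P_i)$ which, on its $n$-th turn, acts on behalf of $G(P_{i_n})$. Specifically, the first time index $i$ is encountered (i.e., at $n = \min N_i$), the current position extends $p$, so $p \Vdash P_i$ guarantees a winning strategy $\tau_i$ for $\exists$ in $G(P_i)$ from that position; fix such a $\tau_i$ once and for all. At every subsequent $n \in N_i$, $\sigma$ plays the move dictated by $\tau_i$ applied to a \emph{virtual history} of $G(P_i)$: this virtual history consists of $\tau_i$'s previous $N_i$-outputs alternating with single $\forall$-moves, where each $\forall$-move is obtained by collapsing the block of intermediate real-game moves (from other $N_j$'s and real $\forall$-moves) into a single condition, namely the last of them.

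To verify $\sigma$ is winning, let $\bar p$ be the play produced against some strategy of $\forall$, and fix any $i < \omega$. The virtual play of $G(P_i)$ induced by $\sigma$ is a legitimate play of $G(P_i)$: the collapsed $\forall$-moves are conditions, since each equals the last condition played in the real game during that interval, which is itself consistent with $T$. In this virtual play $\exists$ plays according to $\tau_i$, which is winning from its starting position, so the compiled structure of the virtual play satisfies $P_i$. Since the virtual and real plays yield the same union of conditions and thus the same compiled structure $A^+(\bar p)$, we have $A^+(\bar p) \models P_i$. As $i$ was arbitrary, $A^+(\bar p) \models \bigwedge_{i<\omega} P_i$, completing the verification.

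The main obstacle is the bookkeeping needed to justify the collapsing step: one must confirm that replacing a block of real-game moves with their union (a single $\forall$-move in the virtual game) yields a valid play of $G(P_i)$ to which $\tau_i$'s winning property genuinely applies. This reduces to the elementary observation that a finite chain of pre-conditions is dominated by its last element, which is itself a condition (consistent with $T$) because it was actually played in the real game. Once this is in place, the argument mirrors Hodges's classical proof, and no new difficulties arise from the continuous setting.
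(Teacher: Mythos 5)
Your argument is correct: it is the standard Hodges-style dovetailing proof, and it adapts to the continuous setting without change because the winning condition for $\exists$ (the play is definitive and $A^+(\bar p)$ has the property) depends only on $\bar p$, which is the same for the real play and each virtual play since the latter is a cofinal subchain of the former. Note, however, that the paper offers no proof to compare against --- the Conjunction Lemma is imported as a black-box Fact from the recap of \cite{Gold}. One small point worth making explicit in your write-up: definitiveness of $\bar p$ is part of $\exists$'s winning condition, and it too transfers from the virtual play of $G(P_i)$ to the real play because the two plays have the same union; your phrase ``the compiled structure of the virtual play'' tacitly uses this.
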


\begin{fact}\label{enforceuniversal}
It is enforceable that the compiled structure be a model of $T_\forall$.
\end{fact}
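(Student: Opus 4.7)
The plan is to reduce, via the Conjunction Lemma, to enforcing one universal consequence of $T$ at a time. Any restricted universal $L$-sentence is (equivalent to one) of the form $\sigma = \sup_{\bar x}\varphi(\bar x)$ with $\varphi$ quantifier-free, and by Pavelka-style completeness the statement ``$A^+(\bar p) \models T_\forall$'' is equivalent to the countable conjunction of the properties $P_{\sigma,r}$: ``$\sigma^{A^+(\bar p)} \leq r$'', one for each pair $(\sigma, r)$ with $r \in \mathbb D^{>0}$ and $T \vdash \sigma \dotminus r$. By the Conjunction Lemma it therefore suffices to show that each $P_{\sigma,r}$ is separately enforceable.

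To enforce a single $P_{\sigma,r}$, I would have $\exists$ fix an enumeration $(\bar c_n, k_n)_{n<\omega}$ of all pairs consisting of a tuple $\bar c$ from $C$ of the appropriate arity together with an integer $k \geq 1$. At her $n$-th move, responding to a condition $p_{2n}$ played by $\forall$, $\exists$ plays $p_{2n+1} := p_{2n} \cup \{\varphi(\bar c_n) < r + 2^{-k_n}\}$. This extension is still a condition: if $M \models T \cup p_{2n}$, then soundness of the Pavelka proof system applied to $T \vdash \sigma \dotminus r$ yields $\sigma^M \leq r$, so $\varphi(\bar c_n)^M \leq r < r + 2^{-k_n}$, giving $M \models p_{2n+1}$. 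To see the strategy actually enforces $P_{\sigma,r}$, note that the interpretations of the constants from $C$ are dense in $A^+(\bar p)$ and $\varphi$ is uniformly continuous, so it suffices to check $\varphi(\bar c)^{A^+(\bar p)} \leq r$ for every tuple $\bar c$ from $C$; and for each such $\bar c$, since the pair $(\bar c, k)$ appears in the enumeration for every $k$, the play forces $\varphi(\bar c)^{A^+(\bar p)} \leq r + 2^{-k}$ for all $k$, giving the desired bound.

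The main issue I would need to handle with care is the bookkeeping: the Conjunction Lemma allows one to combine enforcing strategies for countably many properties, but the combined strategy must also keep the play definitive so that $A^+(\bar p)$ is actually defined. This is achieved by a standard dovetailing, interleaving the move-types (one for each $P_{\sigma,r}$ together with the definitiveness moves already built into $\exists$'s default behavior). The genuine content of the argument lies in the step above, where soundness of the Pavelka proof system lets one pass from the syntactic fact ``$T \vdash \sigma \dotminus r$'' to the semantic bound $\sigma^M \leq r$ in every model $M$ of $T \cup p_{2n}$ --- precisely what is needed to legally extend the current condition by the desired atomic inequality.
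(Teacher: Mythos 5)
The paper does not actually prove this statement: it is imported as a Fact from \cite{Gold} in the background recap of model-theoretic forcing, so there is no in-paper proof to compare against. Your argument is the standard one (and, up to presentation, the one underlying \cite{Gold}): decompose ``$A^+(\bar p)\models T_\forall$'' into the countable family of closed properties $\sigma^{A^+}\leq r$ indexed by provable bounds $T\vdash\sigma\dotminus r$, enforce each one by scheduling the inequalities $\varphi(\bar c)<r+2^{-k}$ over all tuples $\bar c$ from $C$ and all $k$, and finish with the Conjunction Lemma. I find the argument correct: the legality of $\exists$'s moves is exactly as you say (soundness applied to $T\vdash\sigma\dotminus r$ in any model of $T\cup p_{2n}$, with new constants interpreted arbitrarily), and the reduction to $C$-tuples via density and uniform continuity is right. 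The one step you use implicitly that deserves a word is the passage from ``$\varphi(\bar c)<r+2^{-k}$ belongs to $\bar p$'' to ``$\varphi(\bar c)^{A^+}\leq r+2^{-k}$'': this requires knowing that the compiled structure satisfies the conditions of $\bar p$ in the closed sense, which follows from finite satisfiability of $T\cup\bar p$ and the uniform continuity of quantifier-free formulas in the atomic values, and is itself part of the background machinery being recapped. With that citation or one-line justification added, the proof is complete.
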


For a condition $p$ and an $L(C)$-sentence $\varphi$, we set $$F_p(\varphi):=\inf\{r \ : \ p\Vdash \varphi<r\}.$$  

\begin{lemma}
For any condition $p$ and $L(C)$-sentence $\varphi$, $F_p\left(\varphi\right)\leq r$ if and only if $p\Vdash \varphi\leq r$.
\end{lemma}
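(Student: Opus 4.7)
The plan is to prove both directions by combining two standard observations about model-theoretic forcing: (i) if property $P$ implies property $Q$ as properties of $L(C)$-structures, then any winning strategy for $G(P)$ from a given position is also a winning strategy for $G(Q)$, so $p \Vdash P$ entails $p \Vdash Q$; and (ii) the Conjunction Lemma, which allows us to combine countably many forcing statements. The properties here are parameterized by strict/non-strict real inequalities on a fixed sentence $\varphi$, and the interplay between $<$ and $\leq$ is handled by approximating $\leq r$ by the countable conjunction of $< r + 2^{-n}$.

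For the backward direction, I would assume $p \Vdash \varphi \leq r$ and fix $\epsilon > 0$. Any compiled structure $A^+$ satisfying $\varphi^{A^+} \leq r$ also satisfies $\varphi^{A^+} < r + \epsilon$, so observation (i) gives $p \Vdash \varphi < r + \epsilon$. By the definition of $F_p$ as an infimum, this shows $F_p(\varphi) \leq r + \epsilon$. Letting $\epsilon \to 0$ yields $F_p(\varphi) \leq r$.

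For the forward direction, I would assume $F_p(\varphi) \leq r$, so that for every $n \in \mathbb{N}$ the number $r + 2^{-n}$ is not a lower bound for $\{s : p \Vdash \varphi < s\}$; pick $s_n$ in that set with $s_n < r + 2^{-n}$. A compiled structure with $\varphi^{A^+} < s_n$ automatically has $\varphi^{A^+} < r + 2^{-n}$, so by observation (i) we get $p \Vdash \varphi < r + 2^{-n}$ for every $n$. The Conjunction Lemma then delivers $p \Vdash \bigwedge_{n<\omega} (\varphi < r + 2^{-n})$, and since a compiled structure lies in this conjunction exactly when $\varphi^{A^+} \leq r$, this is the same as $p \Vdash \varphi \leq r$.

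The only genuinely delicate point is making sure that observation (i) is legitimate in this game-theoretic setup: a strategy winning $G(P)$ produces definitive plays whose compiled structures satisfy $P$, and if $P \Rightarrow Q$ at the level of structures then those same compiled structures satisfy $Q$, so the strategy wins $G(Q)$ as well. This is a pure unpacking of the definition of $\Vdash$ rather than a real obstacle; everything else is just the interplay between the infimum in the definition of $F_p$ and approximating $\leq r$ from above by strict inequalities.
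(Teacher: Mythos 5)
Your proof is correct and follows essentially the same route as the paper's: the forward direction via the Conjunction Lemma applied to $\varphi < r + 2^{-n}$, and the backward direction by noting $p \Vdash \varphi < s$ for all $s > r$ and taking the infimum. You merely make explicit the monotonicity of $\Vdash$ under implication of properties, which the paper uses silently.
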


\begin{proof}
First suppose that $F_p(\varphi)\leq r$.  Then for every $n>1$, $p\Vdash \varphi<r+\frac{1}{n}$, whence, by the Conjunction Lemma, $p\Vdash \varphi\leq r$.

Conversely, suppose $p\Vdash \varphi\leq r$.  Then for every $s>r$, we have $p\Vdash \varphi<s$, so $F_p(\varphi)\leq s$.  
\end{proof}

We will require the following fact about the function $F_p$, whose proof can be found in \cite[Fact 2.21 and Theorem 2.22]{Gold}:

\begin{fact}
For any condition $p$ and $L(C)$-formula $\varphi(x)$, we have
$$F_p\left(\inf_x\varphi(x)\right)=\sup_{q\supseteq p}\inf_{q'\supseteq q}\inf_{c\in C}F_{q'}(\varphi(c)).$$
\end{fact}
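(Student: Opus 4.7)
The plan is to prove the equality by sandwiching, using a key intermediate lemma characterizing the forcing relation for the infimum quantifier. The lemma to aim for is: for every condition $p$ and rational $r>0$,
$$ p \Vdash \inf_x \varphi(x) < r \iff \forall q\supseteq p\ \exists q'\supseteq q\ \exists c\in C\ (q' \Vdash \varphi(c) < r).$$
This is the continuous analogue of the classical forcing relation for existential quantifiers. The backward direction is easier: given the right-hand side, I would describe a strategy for $\exists$ that, starting from any position extending $p$, repeatedly steers into some $q'$ witnessing $\varphi(c)<r$, thus guaranteeing the inf of $\varphi$ in the compiled structure is strictly below $r$. The forward direction is the real content; it requires extracting, from $\exists$'s winning strategy against arbitrary $\forall$-moves extending $q$, a specific constant $c\in C$ realising the strict inequality $\varphi(c)<r$ in some extended condition. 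Here one uses that $C$ generates a dense substructure of $A^+$ together with the uniform continuity of $\varphi$ (in its moduli) to transfer an approximate witness $a\in A^+$ with $\varphi^{A^+}(a)<r$ to a nearby constant from $C$.

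Granted the intermediate lemma, the two inequalities are formal chases of infs and sups. Write $L$ for the left-hand side and $R$ for the right-hand side of the displayed equation. For $L\leq R$: if $r>R$ then for every $q\supseteq p$ one has $\inf_{q'\supseteq q}\inf_{c} F_{q'}(\varphi(c)) \leq R < r$, so some $q'\supseteq q$ and $c\in C$ satisfy $F_{q'}(\varphi(c))<r$, which by definition of $F$ gives $q'\Vdash \varphi(c)<r$. The intermediate lemma then yields $p\Vdash \inf_x\varphi(x)<r$, so $F_p(\inf_x\varphi(x))\leq r$; taking infimum over such $r$ gives $L\leq R$. For $R\leq L$: if $r>L$ then one can arrange $p\Vdash \inf_x \varphi(x)<r$ (via the preceding lemma relating $F_p$ and $\Vdash\,\leq$, together with the Conjunction Lemma to pass from $\leq$ to $<$ using $r'$ slightly above $r$); the intermediate lemma then furnishes, for each $q\supseteq p$, some $q'\supseteq q$ and $c\in C$ with $q'\Vdash \varphi(c)<r$, whence $F_{q'}(\varphi(c))\leq r$ and $\inf_{q'\supseteq q}\inf_c F_{q'}(\varphi(c))\leq r$; supping over $q$ yields $R\leq r$, and infing over such $r$ gives $R\leq L$.

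The main obstacle is the forward direction of the intermediate lemma. The game-theoretic definition of forcing does not immediately give one a specific condition $q'$ and constant $c$; it only tells us that along any play continuing from $p$, $\exists$ can eventually guarantee the semantic inequality $(\inf_x\varphi(x))^{A^+}<r$. Converting this into an honest syntactic witness requires picking a particular adversarial play by $\forall$ (extending $q$) and reading off, from the resulting chain of conditions, a finite stage at which the strict inequality $\varphi(c)<r$ has been forced for some $c\in C$. Making that extraction rigorous is where one must use density of rational points, the uniform continuity modulus of $\varphi$, and the fact that one may replace $r$ by $r-\epsilon$ to buy slack—this is essentially the argument in \cite[Fact 2.21 and Theorem 2.22]{Gold}, so I would appeal to it rather than redo it in full.
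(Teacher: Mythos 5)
Your outline is correct and matches the route the paper itself takes: the paper offers no in-text proof of this Fact but defers it entirely to \cite[Fact 2.21 and Theorem 2.22]{Gold}, which is precisely the forcing clause for $\inf_x$ (your ``intermediate lemma'') together with the routine $\sup$/$\inf$ bookkeeping you carry out. You correctly isolate the only nontrivial step---extracting a syntactic witness $q'\supseteq q$ and $c\in C$ from the game-theoretic definition via density of the constants and uniform continuity---and appeal to the same source for it, so there is nothing to object to.
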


\subsection{Finitely generic theories are hyperarithmetic}

Until further notice, $T$ is an $L$-theory that is $\Sigma_1^\d$ for some Turing degree $\d$.  We let $\Pre_T$ denote the set of numbers $e\in \mathbb N$ such that $e$ codes a tuple $(e_1,\ldots,e_n)$, where each $e_i$ codes a pair $(k,r)$ with $k$ the code of a quantifier-free restricted $L(C)$-sentence and $r\in \mathbb D^{>0}$.  We let $\Cond_T$ denote those $e\in \Pre_T$ such that $e$ codes a condition for $T$.  If $e\in \Pre_T$, we let $p_e$ denote the pre-condition it codes.

\begin{lemma}

\

\begin{enumerate}
\item $\Pre_T$ is computable.
\item The set $\{(e_1,e_2)\in \Pre_T \ : \ p_{e_1}\subseteq p_{e_2}\}$ is computable.
\item $\Cond_T$ is $\Pi_1^\d$.
\end{enumerate}
\end{lemma}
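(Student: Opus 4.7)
The plan is to handle (1) and (2) via routine decoding and to obtain (3) from the fact that provability from a $\Sigma_1^\d$ theory is itself $\Sigma_1^\d$.

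For (1), $\Pre_T$ is defined purely syntactically: $e\in\Pre_T$ iff $e$ codes a tuple whose entries are pair-codes $(k,r)$ with $k$ the code of a quantifier-free $L(C)$-sentence (a computable condition, since $L(C)$ is a computable language and, as noted in Section 2.2, one can computably check membership in $\ulcorner \operatorname{Form}_{L(C)}\urcorner$) and $r\in\mathbb{D}^{>0}$ (also computable). Hence $\Pre_T$ is computable. For (2), given $(e_1,e_2)\in\Pre_T\times\Pre_T$ one decodes $p_{e_1}$ and $p_{e_2}$ as finite sets of expressions and checks set-theoretic inclusion, which is a computable operation.

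For (3), the plan is to show that the complement $\Pre_T\setminus\Cond_T$---the set of $e\in\Pre_T$ for which $T\cup p_e$ is unsatisfiable---is $\Sigma_1^\d$. To a pre-condition $p_e=\{\varphi_1<r_1,\ldots,\varphi_n<r_n\}$ I would associate the restricted $L(C)$-sentence $\sigma_e:=\min_i(r_i\dotminus\varphi_i)$, whose G\"odel number is computable from $e$ via Lemma~\ref{fandg} and the identity $\min(x,y)=x\dotminus(x\dotminus y)$ (together with the fact that each dyadic $r_i\in(0,1]$ is itself expressible using the restricted connectives $0,1,x/2,\dotminus$). Since $\sigma_e^M>0$ iff every $\varphi_i^M<r_i$, unsatisfiability of $T\cup p_e$ is equivalent to $\sigma_e^M=0$ in every $L(C)$-model of $T$. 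Pavelka-style completeness, transferred to the language $L(C)$, should then translate this semantic condition into provability of a computable-in-$e$ sentence from $T$ in the formal proof system of \cite{completeness}. Since that proof system has a $\Sigma_1^\d$-enumerable set of theorems when $\ulcorner T\urcorner$ is $\Sigma_1^\d$, this would yield the required $\Sigma_1^\d$ bound on $\Pre_T\setminus\Cond_T$ and hence $\Cond_T\in\Pi_1^\d$.

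The main subtlety I expect to have to pin down is that a single finitary witness suffices. A literal application of the Pavelka identity recorded in the paper only gives the $\Pi_2^\d$-bound ``$T\vdash\sigma_e\dotminus r$ for every $r\in\mathbb{D}^{>0}$''. To bring the complexity down to $\Sigma_1^\d$, I plan to exploit the fact that the proof relation $T\vdash\sigma$ of \cite{completeness} is itself a finitary, $\Sigma_1^\d$-enumerable derivation relation that captures the limiting case ($T\vdash\sigma_e$, with no $\dotminus r$, should already be equivalent to the semantic statement $\sigma_e^M=0$ for all $M\models T$). Equivalently, ``$T\cup p_e$ is inconsistent'' should be witnessed by a single finite derivation enumerable using $\d$, which is exactly what gives $\Pre_T\setminus\Cond_T\in\Sigma_1^\d$ and hence $\Cond_T\in\Pi_1^\d$.
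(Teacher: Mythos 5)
Your proposal matches the paper's proof: items (1) and (2) are dismissed there as clear, and for (3) the paper uses exactly your reduction, observing that $p_e$ is a condition if and only if $\neg\Phi_{\operatorname{Theorem}}(\ulcorner T\urcorner,\ulcorner \min_{i}(r_i\dotminus \varphi_{k_i})\urcorner)$ and then invoking the fact that the theorems of a $\Sigma_1^\d$ theory form a $\Sigma_1^\d$ set. The subtlety you flag --- that one needs the single finitary provability statement $T\vdash\sigma_e$, rather than the family $T\vdash\sigma_e\dotminus r$ for all $r\in\mathbb D^{>0}$ supplied by the Pavelka-style completeness theorem, to characterize unsatisfiability of $T\cup p_e$ --- is genuine, but the paper's one-line proof simply asserts this equivalence without comment, so your argument is on exactly the same footing as the original.
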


\begin{proof}
The first two statements are clear.  Since $\{\varphi_{k_i}<r_i \ : \ i=1,\ldots, n\}$ is a condition for $T$ if and only if $\neg\Phi_{\operatorname{Theorem}}(\ulcorner T\urcorner,\ulcorner \min_{i=1,\ldots,n}r_i\dotminus \varphi_{k_i}\urcorner)$, it follows that $\Cond_T$ is $\Pi_1^\d$. 
\end{proof}

Given a pre-condition $p=\{\varphi_i<r_i \ : \ i=1,\ldots,n\}$, let $$p^\$:=\bigcup\left\{\max_{1\leq i\leq n}(\varphi_i\dotminus s_i) \ : \ s_1,\ldots,s_n\in \mathbb D, \ s_i<r_i\right\}.$$  Note that $p^\$$ is an $L(C)$-theory for which $A\models T\cup p$ if and only if there is $\theta\in p^\$$ such that $A\models T\cup \theta$.

The following lemma is clear:

\begin{lemma}
There is a computable set $C\subseteq \mathbb N^2$ such that $(e,c)\in C$ if and only if $e\in \Pre_T$, $c\in \Sent_{L(C)}$, and $\varphi_c\in \ulcorner p^\$\urcorner$.
\end{lemma}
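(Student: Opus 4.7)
The plan is to exhibit a total recursive decision procedure for $C$ that proceeds by parsing the two input G\"odel numbers. On input $(e,c)\in \mathbb N^2$, I would first invoke part (1) of the preceding lemma to decide whether $e \in \Pre_T$; if not, reject. If so, I would decode $e$ to recover the pre-condition $p_e = \{\varphi_{k_1}<r_1,\ldots,\varphi_{k_n}<r_n\}$. This step is effective because $L(C)$ is a computable language with an effective G\"odel numbering of restricted formulae, and the dyadic rationals $r_i$ are packaged explicitly in $e$ as part of the pre-condition code. In parallel, one uses the computable predicate defining $\ulcorner \Sent_{L(C)}\urcorner$ to verify that $c$ codes a restricted $L(C)$-sentence; if not, reject.

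The key observation is that every member of $p_e^\$$ has a very rigid syntactic shape, namely $\max_{1\leq i\leq n}(\varphi_{k_i} \dotminus s_i)$ with $s_1,\ldots,s_n \in \mathbb D$ and $s_i<r_i$. Because the sequence $(\varphi_{k_1},\ldots,\varphi_{k_n})$ is already fixed by $p_e$, the only syntactic freedom in a candidate sentence in $p_e^\$$ lies in the $n$ dyadic slots holding the $s_i$. Parsing the code $\varphi_c$ against this template is then a routine recursion over the formula tree, built out of Lemma \ref{fandg} and its evident analogue for $\max$ (with $\max$ unfolded in the primitive restricted connectives $\dotminus$, $\tfrac{x}{2}$, $0$, $1$): either the parse fails, in which case we reject, or it returns a unique tuple $(s_1,\ldots,s_n)$ of dyadic rationals occupying those slots. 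One then accepts iff $s_i < r_i$ for each $i$, a decidable test since dyadics admit an effective order.

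Every step above is uniformly computable in $(e,c)$ and the procedure halts on every input, so the set $C$ is computable. There is no genuine obstacle here beyond carefully setting up the template-matching; the bookkeeping is entirely syntactic because the $s_i$ are explicitly encoded within $\varphi_c$ rather than existentially introduced. This is why the authors can reasonably describe the lemma as clear.
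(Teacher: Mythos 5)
Your argument is correct and is precisely the routine syntactic verification the paper has in mind when it labels this lemma ``clear'' and omits a proof: the only a priori unbounded search, over the dyadic tuples $(s_1,\ldots,s_n)$ with $s_i<r_i$, is eliminated because those dyadics are read off from the code $c$ itself by template matching against the rigid shape $\max_{1\leq i\leq n}(\varphi_{k_i}\dotminus s_i)$, and everything else (deciding $e\in\Pre_T$, decoding $p_e$, deciding $c\in\ulcorner\Sent_{L(C)}\urcorner$, and comparing dyadics) is effective. The only caveat is that ``returns a unique tuple'' should be read at the level of the dyadic \emph{values} rather than their syntactic representations as restricted terms (unless one fixes a canonical encoding of dyadic constants, as Lemma \ref{fandg} implicitly does), but this does not affect decidability of the final test $s_i<r_i$.
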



\begin{prop}
Suppose that $p$ is a condition for $T$ and $\psi(x)\in \Sent_{L(C)}$ is quantifier-free.  Then
$$p\Vdash \sup_x \psi(x)\leq r \Leftrightarrow  (\forall \theta\in p^\$)T\cup \{\theta\} \vdash \sup_x\psi(x)\dotminus r.$$
\end{prop}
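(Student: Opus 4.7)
The plan is to recast the right-hand side semantically via Pavelka-style completeness and then prove both directions by simple embedding arguments inside the forcing game.

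First, by the Pavelka-style Completeness Theorem, for any $L(C)$-theory $S$ and restricted sentence $\sigma$, $S\vdash\sigma\dotminus r$ is equivalent to the assertion that $\sigma^M\leq r$ in every model $M\models S$. Combined with the observation built into the definition of $p^\$$ that $M\models T\cup p$ iff $M\models T\cup\{\theta\}$ for some $\theta\in p^\$$, the right-hand side of the proposition reduces to the semantic statement that every $L(C)$-model of $T\cup p$ satisfies $\sup_x\psi(x)\leq r$. From here one only has to relate this semantic bound to the forcing relation.

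For $(\Leftarrow)$, assume the semantic bound and fix any position $(p_0,\ldots,p_k)$ with $p\subseteq p_k$. Player $\exists$ plays any extensions that guarantee the outcome $\bar p$ is definitive (always possible). Each $p_i$ is a condition, so compactness gives a model $N\models T\cup\bar p$, and definitiveness of $\bar p$ yields an isometric $L(C)$-embedding $A^+(\bar p)\hookrightarrow N$. Since $N\models T\cup p$, the hypothesis gives $(\sup_x\psi(x))^N\leq r$, so $\psi(t)^{A^+}=\psi(t)^N\leq r$ for every closed $L(C)$-term $t$ (here we use that $\psi$ is quantifier-free). Density of these terms in $A^+$ and uniform continuity of $\psi$ upgrade this to $(\sup_x\psi(x))^{A^+}\leq r$, so $\exists$ wins.

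For $(\Rightarrow)$ I argue the contrapositive. If the semantic statement fails, choose $\theta\in p^\$$, a model $M\models T\cup\{\theta\}$, and $a\in M$ with $\psi(a)^M=r+\delta$ for some $\delta>0$; pick a constant $c\in C$ not appearing in $p$ and form the pre-condition $q:=p\cup\{(r+\frac{\delta}{2})\dotminus\psi(c)<\frac{\delta}{4}\}$. Interpreting $c$ as $a$ in $M$ shows $T\cup q$ is satisfiable, so $q$ is a condition extending $p$. However, any definitive extension of $q$ compiles a structure $A^+$ with $\psi(c)^{A^+}>r+\frac{\delta}{4}>r$, whence $(\sup_x\psi(x))^{A^+}>r$ and $\exists$ loses from the position $(q)$; therefore $p\not\Vdash\sup_x\psi(x)\leq r$.

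The main technical obstacle is the $(\Leftarrow)$ direction, specifically producing the isometric embedding $A^+(\bar p)\hookrightarrow N$ cleanly from definitiveness together with satisfiability of $T\cup\bar p$, and then transferring the bound from the dense set of closed $L(C)$-terms to all of $A^+$ using uniform continuity of $\psi$. The $(\Rightarrow)$ direction and the Pavelka reduction are essentially routine once this embedding-plus-density machinery is in place.
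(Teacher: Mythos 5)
Your argument is essentially the one in the paper: for $(\Rightarrow)$ you argue the contrapositive by extending $p$ to a condition that pins $\psi(c)$ above $r$ (the paper uses $q:=p\cup\{1\dotminus\psi(c)<1-r\}$, a cosmetic variant of yours), and for $(\Leftarrow)$ you embed the compiled structure into a model of $T\cup p$ and pull the bound back along the embedding, exactly as the paper does (it routes through the fact that it is enforceable that the compiled structure model $T_\forall$, then embeds that into some $B\models T$ and picks $\theta\in p^{\$}$ with $B\models\theta$; you go directly to a model of $T\cup\bar p$). The one step whose justification is wrong as written is ``each $p_i$ is a condition, so compactness gives a model $N\models T\cup\bar p$'': the entries of $\bar p$ are \emph{strict} inequalities $\varphi<r$, and compactness applied to a finitely satisfiable chain of open conditions only yields a model of the weakened closed conditions $\varphi\leq r$, which need not satisfy $\bar p$ itself (compare $\{\varphi<1/n\}_n$). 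The conclusion you need is nevertheless available without compactness: definitiveness requires that for each atomic $\varphi$ there be a \emph{unique} $r$ with $T\cup\bar p\models\varphi=r$, which forces $T\cup\bar p$ to be satisfiable (otherwise every $r$ would work), and any model of it receives the isometric copy of $A^+(\bar p)$ exactly as you describe; alternatively you can simply invoke the paper's fact that modelling $T_\forall$ is enforceable and then embed into a model of $T$, noting that the quantifier-free conditions in $p$ persist upward. With that repair the rest of your proof (the Pavelka reduction of the right-hand side to a semantic bound, and the term-density/uniform-continuity transfer of $\sup_x\psi(x)\leq r$ to $A^+$) goes through and matches the paper's level of rigor, including the shared mild imprecision in passing from non-provability of $\sup_x\psi(x)\dotminus r$ to the existence of a model with value strictly above $r$.
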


\begin{proof}
First suppose that $\theta\in p^\$$ is such that $T\cup \{\theta\}\not\vdash (\sup_x \psi(x))\dotminus r$.  Then by the Completeness Theorem, there is an $L(C)$-structure $A$ such that $A\models T\cup \{\theta\}$ and $(\sup_x \psi(x))^A>r$.  Take constants $c$ from $C$ such that $\psi(c)^A>r$.  Then $q:=p\cup\{1\dotminus \psi(c)<1-r\}$ is a condition and $q\Vdash \psi(c)>r$.  Consequently, $p\not\Vdash \sup_x \psi(x)\leq r$.

Conversely, suppose that $T\cup \{\theta\}\vdash (\sup_x \psi(x))\dotminus r$ for all $\theta\in p^\$$.  Suppose that player I plays $p$.  Then player II can play so that the compiled structure $A$ is such that $A\models T_\forall \cup p$.  Take $B\models T$ such that $A\subseteq B$ and take $\theta\in p^\$$ such that $B\models \theta$.  By assumption and the Completeness Theorem again, $(\sup_x \psi(x))^B\leq r$, whence $(\sup_x \psi(x))^A\leq r$.  It follows that $p\Vdash (\sup_x\psi(x))\leq r$, as desired.

\end{proof}

\begin{thm}
For each $n\geq 0$, the set $$\{(p,k,r) \ : \ e\in \Cond_T, \ k\in \ulcorner \forall_{2n+1}\text{-}\Sent_{L(C)}\urcorner, \text{ and }F_{p_e}(\varphi_k)\leq r\}$$ is $\Pi_{2n+2}^\d$.
\end{thm}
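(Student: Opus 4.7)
The plan is to induct on $n\geq 0$. The base case $n=0$ is essentially the content of the Proposition just proved: for a quantifier-free $\psi$, one has $F_{p_e}(\sup_x\psi(x))\leq r$ iff for every $\theta\in p_e^\$$, $T\cup\{\theta\}\vdash(\sup_x\psi(x))\dotminus r$. By the lemma immediately preceding the Proposition, the set of pairs $(e,\ulcorner\theta\urcorner)$ with $\theta\in p_e^\$$ is computable, and provability from $T\cup\{\theta\}$ is $\Sigma_1^\d$ uniformly in $\theta$ (since $T$ is $\Sigma_1^\d$, and adding a single sentence preserves $\Sigma_1^\d$). The clause has the shape ``$\forall \theta\,[\text{computable}\to\Sigma_1^\d]$,'' which is $\Pi_2^\d$, as required.

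For the inductive step, write a $\forall_{2n+3}$-sentence as $\sup_x\inf_y\chi(x,y)$ with $\chi\in\forall_{2n+1}$. Alongside the stated identity
$$F_p(\inf_y\chi(c,y))=\sup_{q\supseteq p}\inf_{q'\supseteq q}\inf_{c'\in C}F_{q'}(\chi(c,c')),$$
I will record the ``pointwise'' identity $F_p(\sup_x\xi(x))=\sup_{c\in C}F_p(\xi(c))$: one direction is immediate from $\xi(c)\leq\sup_x\xi(x)$, and the other uses density of $C$ in the compiled structure together with the uniform continuity of $\xi$. Combining yields
$$F_{p_e}(\sup_x\inf_y\chi)\leq r\iff \forall c\in C\ \forall q\supseteq p_e\ \forall\varepsilon\in\mathbb{Q}^{>0}\ \exists q'\supseteq q\ \exists c'\in C :\ F_{q'}(\chi(c,c'))\leq r+\tfrac{\varepsilon}{2},$$
where $q,q'$ range over $\Cond_T$, which is $\Pi_1^\d$ by the earlier lemma.

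The complexity count is now mechanical. By the inductive hypothesis, the innermost predicate $F_{q'}(\chi(c,c'))\leq r+\varepsilon/2$ is $\Pi_{2n+2}^\d$; conjoining it with the $\Pi_1^\d$ predicates ``$q'\in\Cond_T$'' and ``$p_{q'}\supseteq p_q$'' keeps it $\Pi_{2n+2}^\d$ (since $2n+2\geq 2$). Applying the two existentials $\exists q'\exists c'$ gives $\Sigma_{2n+3}^\d$; the computable universal $\forall\varepsilon$ bumps this to $\Pi_{2n+4}^\d$; the outer $\forall q$ (with the condition membership absorbed as a $\Sigma_1^\d$ antecedent, using $\Sigma_1^\d\subseteq\Pi_{2n+4}^\d$) and the computable $\forall c$ merge into the leading universal block. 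This yields the desired $\Pi_{2(n+1)+2}^\d$.

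The main obstacle is bookkeeping, in two directions. First, I must justify the sup-identity $F_p(\sup_x\xi(x))=\sup_{c\in C}F_p(\xi(c))$ carefully from the definition of forcing (rather than simply citing it), using density of the constants in the compiled structure and uniform continuity of $\xi$. Second, I must check at each step that inserting the $\Pi_1^\d$-predicate $\Cond_T$ — either as a conjunct under an existential or as the antecedent of an implication under a universal — never raises the arithmetical level when the surrounding body is already at level $\geq 2$. Once these two points are in hand, the proof is a routine quantifier count.
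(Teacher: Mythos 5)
Your proposal is correct and follows essentially the same route as the paper: the same base case via the displayed Proposition, the same inductive decomposition of a $\forall$-sentence as $\sup_x\inf_y\chi$ using the quoted identity for $F_p(\inf_y\,\cdot\,)$, and the same quantifier count with the $\Pi_1^\d$ predicate $\Cond_T$ absorbed at each stage. The only difference is that you explicitly isolate (and rightly insist on proving) the identity $F_p(\sup_x\xi(x))=\sup_{c\in C}F_p(\xi(c))$, which the paper uses implicitly in passing from $p\Vdash\varphi\leq r$ to $(\forall c\in C)\ p\Vdash\inf_y\psi(c,y)\leq r$.
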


\begin{proof}

First suppose that $n=0$.  By the previous lemma, $F_{p_e}(\varphi_k)\leq r$ if and only if:  for every $c\in \mathbb N$, either $(e,c)\notin C$ or else $\Phi_{\operatorname{Theorem}}(\ulcorner T\urcorner\cup\{c\},\ulcorner \varphi_k\dotminus r\urcorner)$.  This condition is $\Pi_2^\d$.

%
%

Now suppose $n\geq 1$ and $\varphi=\sup_x\inf_y\psi(x,y)$, where $\psi\in \forall_{2n-1}\text{-}\Sent_{L(C)}$.  Further suppose that $p$ is a condition for $T$.  Then the following are equivalent:
\begin{itemize} 
\item $F_p(\varphi)\leq r$.
\item $p\Vdash \varphi\leq r$.
\item $(\forall c\in C)$ $p\Vdash \inf_y \psi(c,y)\leq r$.
\item $(\forall c\in C)$ $F_p(\inf_y \psi(c,y)\leq r$.
\item $(\forall c\in C)(\forall q\supseteq p)(\forall m\in \mathbb N)(\exists q'\supseteq q)(\exists d\in C)F_{q'}(\psi(c,d))\leq r+\frac{1}{m}$. 
\end{itemize}
By induction, the ``matrix'' in the last bullet is $\Pi_{2n}$ in the parameters; since the G\"odel code for $\psi$ is computable from the G\"odel code for $\varphi$, we see that the entire condition is $\Pi_{2n+2}^\d$, as desired.
\end{proof}

Recall that $T$ has the \emph{joint embedding property (JEP)} if any two models of $T$ can be jointly embedded into a third model of $T$.  Before moving on, we need to recall:

\begin{fact}[\cite{Gold}]
If $T$ has the JEP, then for any $L$-sentence $\sigma$, there is a unique real number $r_\sigma$ such that $\Vdash \sigma=r$.  
\end{fact}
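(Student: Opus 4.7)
The natural candidate is $r_\sigma := F_\emptyset(\sigma)$, and the plan is to verify that $\Vdash \sigma = r_\sigma$, handling uniqueness and existence separately. Uniqueness comes essentially for free: if both $\Vdash \sigma = r$ and $\Vdash \sigma = r'$ with $r \neq r'$, the Conjunction Lemma would give $\Vdash (\sigma = r \wedge \sigma = r')$, which is impossible since no definitive compiled structure $A$ can have $\sigma^A$ equal to two distinct values at once; this would contradict that $\exists$ actually wins the combined game.

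For existence, it suffices by the Conjunction Lemma (and taking $\epsilon \to 0$ along a rational sequence) to show both $\Vdash \sigma \leq r_\sigma + \epsilon$ and $\Vdash \sigma \geq r_\sigma - \epsilon$ for every rational $\epsilon > 0$. The upper bound is essentially by definition: since $F_\emptyset(\sigma) = r_\sigma < r_\sigma + \epsilon$, the lemma characterizing $F_p$ (immediately preceding the fact about $F_p(\inf_x \varphi)$) yields $\emptyset \Vdash \sigma \leq r_\sigma + \epsilon$, i.e., $\Vdash \sigma \leq r_\sigma + \epsilon$.

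The lower bound is where JEP enters. The key intermediate claim is that $F_p(\sigma) = r_\sigma$ for \emph{every} condition $p$ for $T$ (crucially using that $\sigma$ is an $L$-sentence, not just an $L(C)$-sentence). The inequality $F_p(\sigma) \leq F_\emptyset(\sigma)$ is immediate from monotonicity of forcing (if $\emptyset$ forces $\sigma < r$, then so does $p \supseteq \emptyset$). For the reverse, I would rename the constants appearing in $p$ to be disjoint from those in any competing condition $q$; by JEP, any $T$-model realizing $p$ and any $T$-model realizing $q$ jointly embed into a common $T$-model, so $T \cup p \cup q$ is satisfiable, i.e., $p$ and $q$ are compatible as conditions. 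This compatibility, combined with Pavelka-style completeness, prevents $p$ from imposing any stricter forcing bound on a pure $L$-sentence than $\emptyset$ does, giving $F_p(\sigma) \geq F_\emptyset(\sigma)$. Once the key claim is established, no condition $p$ can force $\sigma \leq s$ for any $s < r_\sigma$, so whenever $\forall$ plays, $\exists$ can extend to a position consistent with $\sigma > r_\sigma - \epsilon$; together with Fact~3.4 (enforceability of being a model of $T_\forall$) and the Conjunction Lemma, this yields $\Vdash \sigma \geq r_\sigma - \epsilon$.

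The main obstacle is making the JEP-based compatibility argument for $F_p(\sigma) = F_\emptyset(\sigma)$ fully rigorous: one must carefully handle the renaming of constants, show that compatibility of conditions translates into a genuine game-theoretic statement about the forcing tree, and control the passage from semantic satisfaction in a jointly embedded model to a forced inequality on the compiled structure. The classical (discrete) template for this argument lives in \cite[Chapter 7]{HW}; the continuous adaptation requires extra care to track the real-valued quantity $\sigma^A$ rather than a truth value, but the skeleton of the proof follows the same pattern.
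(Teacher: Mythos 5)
The paper does not prove this statement; it is imported verbatim from \cite{Gold}, so the comparison is against the argument there. Your skeleton is the standard one and matches it: take $r_\sigma:=F_\emptyset(\sigma)$, get uniqueness from the Conjunction Lemma (two distinct forced values would have to be realized simultaneously in the compiled structure of some definitive play), get $\Vdash \sigma\leq r_\sigma$ directly from the lemma characterizing $F_p$, and use JEP to prove the invariance $F_p(\sigma)=F_\emptyset(\sigma)$ for every condition $p$ and pure $L$-sentence $\sigma$. That invariance claim is correct and your mechanism (rename the constants of $p$ away from those of any $q$, use JEP to see $p'\cup q$ is a condition, use invariance of forcing of $L$-sentences under permutations of $C$) is the right one; note, though, that the conclusion $F_p(\sigma)\geq F_\emptyset(\sigma)$ comes from a game argument --- every position can be extended by $\exists$ to a condition containing a renamed copy of $p$, hence $p\Vdash\sigma<r$ already implies $\emptyset\Vdash\sigma<r$ --- not from Pavelka completeness, which plays no role at this step.

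The genuine gap is in the lower bound. You argue that since no condition forces $\sigma\leq s$ for $s<r_\sigma$, player $\exists$ ``can extend to a position consistent with $\sigma>r_\sigma-\epsilon$.'' Consistency is far too weak: every position is consistent with many values of $\sigma$, and what the compiled structure of the infinite play actually satisfies is controlled only by what the positions \emph{force}. What you need is either the density/duality statement that every condition has an \emph{extension forcing} $\sigma\geq r_\sigma-\epsilon$, or (what \cite{Gold} actually uses) the forcing-equals-truth theorem that it is enforceable that $\varphi^{A^+(\bar p)}=\lim_k F_{p_k}(\varphi)$ for all sentences $\varphi$; combined with $F_{p_k}(\sigma)=r_\sigma$ for all $k$, that theorem finishes the proof in one line. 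Neither of these is supplied by the Conjunction Lemma or by enforceability of $T_\forall$, and neither is free --- this is precisely the continuous analogue of the ``no extension forces $\neg\phi$ implies some extension forces $\phi$'' lemma. You flag that something needs care, but you locate the difficulty in the renaming argument (which is routine) rather than here, where the real missing content lies.
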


\begin{defn}
If $T$ has the JEP, then the \textbf{finite forcing companion of $T$} is the complete theory $T^f:=\{|\sigma-r_\sigma| \ : \ \sigma\in \Sent_L\}$.
\end{defn}

\begin{cor}
Suppose that $T$ has the JEP.  Then $T^f\leq_T \d^{(\omega)}$.
\end{cor}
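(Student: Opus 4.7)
The plan is to exploit the JEP to identify $r_\sigma$ with the forcing value $F_\emptyset(\sigma)$ and then to place the computation of this value at level $\d^{(\omega)}$ of the hyperarithmetic hierarchy via the previous theorem. First I would verify that $r_\sigma = F_\emptyset(\sigma)$: the relation $\Vdash \sigma = r_\sigma$ implies in particular $\Vdash \sigma \leq r_\sigma$, so the earlier lemma gives $F_\emptyset(\sigma) \leq r_\sigma$, and conversely any $s < r_\sigma$ with $\Vdash \sigma \leq s$ would contradict the uniqueness clause of the JEP fact by forcing $\sigma$ to a second value distinct from $r_\sigma$ in the generic compiled structures.

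Next I would feed this into the previous theorem. Because $L$ is computable, every restricted $L$-sentence $\sigma$ is effectively equivalent to a prenex one, and by padding with dummy quantifiers I may assume $\sigma \in \forall_{2n+1}\text{-}\Sent_L \subseteq \forall_{2n+1}\text{-}\Sent_{L(C)}$ with $n$ readable from $\ulcorner \sigma \urcorner$. The empty pre-condition codes an element of $\Cond_T$ since the JEP presupposes satisfiability of $T$. Applying the previous theorem to this pre-condition shows that $\{(k, r) : k \in \ulcorner \forall_{2n+1}\text{-}\Sent_L \urcorner,\ F_\emptyset(\varphi_k) \leq r\}$ is $\Pi_{2n+2}^\d$ uniformly in $(n, k, r)$, hence decidable from $\d^{(2n+2)}$, and a fortiori from $\d^{(\omega)}$.

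To realize $T^f$ as a $\d^{(\omega)}$-computable function in the sense of the earlier definition, on input $k = \ulcorner \sigma \urcorner$ and rational tolerance $\epsilon > 0$ one first computes the prenex depth of $\sigma$, then bisects an initial interval $[0, B]$ containing $r_\sigma$ (with $B$ a computable bound on the values of restricted sentences, available from their connective structure), querying the $\d^{(\omega)}$-oracle at each midpoint $m$ to decide $F_\emptyset(\sigma) \leq m$; after $\lceil \log_2(B/\epsilon) \rceil$ bisections one outputs an interval of length less than $\epsilon$ containing $r_\sigma$. This is exactly what is required by the definition of $\d^{(\omega)}$-computability for $\Th$-style functions.

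The main subtlety, rather than a genuine obstacle, is the uniformity: we want one $\d^{(\omega)}$-algorithm that handles sentences of every prenex depth, where the relevant level of the arithmetic hierarchy grows with the sentence. This works because $\d^{(\omega)}$ computes each $\d^{(n)}$ uniformly in $n$ and the prenex depth of $\sigma$ is itself computable from $\ulcorner \sigma \urcorner$, so one can first read off the appropriate $n$ and then issue the corresponding $\Pi_{2n+2}^\d$ query as a single $\d^{(\omega)}$-call.
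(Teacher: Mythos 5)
Your proposal is correct and follows essentially the same route as the paper: identify $r_\sigma$ with $F_\emptyset(\sigma)$ via the JEP fact, use the preceding theorem to decide $F_\emptyset(\varphi_k)\leq r$ from $\d^{(2n+2)}$ (hence from $\d^{(\omega)}$, uniformly in the prenex depth $n$), and then search rational thresholds until $r_\sigma$ is trapped in an interval of length less than $\epsilon$. The paper's version is terser --- it enumerates rationals rather than bisecting and leaves the identification $r_\sigma=F_\emptyset(\sigma)$ and the uniformity of the oracle queries implicit --- but these are the same argument.
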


\begin{proof}
Fix $k\in \ulcorner \forall_{2n+1}\text{-}\Sent_L\urcorner$ and $\epsilon>0$.  Given positive $r\in \mathbb D$, using $\d^{(2n+2)}$, one can determine whether or not $F_\emptyset(\varphi_k)\leq r$, that is, whether or not $\Vdash \varphi_k\leq r$.  If the answer is ``yes'', then one concludes that $\varphi_k^{T^f}\leq r$.  If the answer is ``no'', then since we know that $\Vdash \varphi_k=\varphi_k^{T^f}$, then we have that $\varphi_k^{T^f}>r$.  Thus, one runs this algorithm over all possible rational $r>0$ and waits until one sees that $\varphi_k^{T^f}$ belongs to an interval of radius $\epsilon$.
\end{proof}

\subsection{Examples}

We now give some applications of the previous corollary:

\begin{example}
Let $T$ be the theory of \cstar-algebras.  Then $T$ is $\Sigma_1$ and has the JEP.  It follows that $T^f\leq_T \mathbf 0^{(\omega)}$.  If the so-called \emph{Kirchberg embedding problem} (KEP) has a positive solution, that is, every \cstar-algebra embeds into an ultrapower of the Cuntz algebra $\mathcal O_2$, then it was shown in \cite{Gold} that $T^f=\Th(\mathcal O_2)$ and thus $\Th(\mathcal O_2)\leq_T \mathbf 0^{(\omega)}$.

Alternatively, if one lets $T=\Th_\forall(\mathcal O_2)$, the universal theory of $\mathcal O_2$, then $T$ has the JEP and $T^f=\Th(\mathcal O_2)$.  Consequently, if $\Th_\forall(\mathcal O_2)$ is $\Sigma_n^\d$ for some $n$, then $\Th(\mathcal O_2)\leq \d^{(\omega)}$.  As in \cite{GoldHart}, one can show that KEP implies that $\Th_\forall(\mathcal O_2)$ is computable, whence we arrive at the same conclusion as in the previous paragraph.
\end{example}


\begin{example}
If $T$ is the theory of commutative \cstar-algebras, then $T$ is $\Sigma_1$ and has the JEP.  In this case, $T^f$ equals the model companion of $T$, namely $\Th(C(2^\omega))$, and thus we conclude $\Th(C(2^\omega))\leq_T \mathbf 0^{(\omega)}$. 
\end{example}

Since $\Th(C(2^\omega))$ is arguably the nicest theory of an infinite-dimensional operator algebra, it is not outlandish to ponder the following:

\begin{question}
Is $\Th(C(2^\omega))$ computable?
\end{question}

\begin{example} If $T$ is the theory of projectionless commutative \cstar-algebras, then $T$ is $\Sigma_1$\footnote{See \cite[Remark 1.2]{pseudoarc}.} and has the JEP.  In \cite{Gold}, it was shown that $T^f=\Th(C(\mathbb P))$, where $\mathbb P$ is the so-called \emph{pseudoarc}.  It follows that $\Th(C(\mathbb P))\leq_T \mathbf 0^{(\omega)}$.
\end{example}

Although this paper is about operator \emph{algebras}, we would be remiss if we did not mention the following examples:

\begin{example}
Suppose that $T$ is the theory of operator spaces (resp. operator systems).  Then $T$ is $\Sigma_1$\footnote{See \cite[Appendix B]{KEP}.} and has the JEP.  In \cite{Gold}, it was shown that $T^f=\Th(\mathbb {NG})$ (resp. $T^f=\Th(\mathbb {GS})$), where $\mathbb {NG}$ is the so-called \emph{noncommutative Gurarij space} (resp. $\mathbb {GS}$ is the \emph{Gurarij operator system}).  Consequently, $\Th(\mathbb {NG}),\Th(\mathbb {GS})\leq_T \mathbf 0^{(\omega)}$.
\end{example}


\subsection{Musings on oracle computability of presentations of compiled structures}

In the previous subsection, we saw that having hyperarithmetic theory is an enforceable property.  However, what about adding parameters?

First, we recall that a model $A$ of $T$ is \textbf{enforceable} if the property that the reduct of the compiled structure be isomorphic to $A$ is an enforceable property.

\begin{prop}
Suppose that $T$ has JEP.  Then the following are equivalent:
\begin{enumerate}
\item There is an oracle $\d$ such that it is enforceable that the elementary diagram (with respect to some presentation) of the compiled structure is $\d$-computable.
\item There is an oracle $\d$ such that it is enforceable that the compiled structure has a $\d$-computable presentation.
\item There is an enforceable model of $T$.
\end{enumerate}
\end{prop}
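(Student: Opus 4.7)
The implications $(3) \Rightarrow (1)$ and $(1) \Rightarrow (2)$ are essentially formal. For $(3) \Rightarrow (1)$, suppose $A$ is an enforceable model of $T$; expand $A$ to an $L(C)$-structure $A^+$ by having the constants in $C$ enumerate a countable dense subset, and set $\d$ to be the Turing degree of the elementary diagram of $A^+$ with respect to this induced presentation. Then the property ``the reduct of the compiled structure is isomorphic to $A$'' already implies ``the elementary diagram of the compiled structure is $\d$-computable'' (with respect to the presentation induced by $C$), so the latter property is enforceable. For $(1) \Rightarrow (2)$, a $\d$-computable elementary diagram supplies, as a subset, the $\d$-computable values of quantifier-free $L(C)$-sentences, and hence in particular the values of norms (respectively $2$-norms) on rational points of the presentation supplied by the constants in $C$; this is exactly a $\d$-computable presentation of the compiled structure.

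The substantive content is $(2) \Rightarrow (3)$. Fix $\d$ witnessing $(2)$. Since there are only countably many $\d$-computable presentations of separable operator algebras, the isomorphism classes of $L$-structures that admit such a presentation form a countable collection, which we enumerate as $(A_i)_{i \in \mathbb N}$. For each $i$, let $P_i$ denote the property ``the reduct of the compiled structure is isomorphic to $A_i$''; by hypothesis $\bigvee_i P_i$ is an enforceable property of the compiled structure.

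The plan is to establish a dichotomy: for each $i$, either $P_i$ is enforceable or $\neg P_i$ is enforceable. Granting this, if no $P_i$ were enforceable, then every $\neg P_i$ would be, and the Conjunction Lemma would yield that $\bigwedge_i \neg P_i$ is enforceable, contradicting the enforceability of $\bigvee_i P_i$. Consequently some $P_i$ is enforceable, which is the statement that $A_i$ is an enforceable model of $T$.

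The main obstacle is justifying the dichotomy. The approach is a topological zero-one law: each $P_i$ is isomorphism-invariant and, via the standard back-and-forth characterization of isomorphism with the fixed separable structure $A_i$, is a Borel property of the compiled structure, while the forcing game of \cite{Gold} is homogeneous under renaming of the constants in $C$. In the Banach--Mazur analogy, an isomorphism-invariant Borel property on this space of plays must be either comeager (and hence enforceable by $\exists$) or meager (so that its negation is comeager, and hence enforceable). Carrying out this dichotomy rigorously in the continuous model-theoretic setting is the technical heart of the argument, and amounts to a careful adaptation of the classical forcing analysis underlying \cite[Chapter 7]{HW}, with approximate agreement replacing equality throughout.
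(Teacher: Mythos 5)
Your argument is essentially the paper's: the two easy implications are handled the same way, and the per-$P_i$ dichotomy you identify as the technical heart is precisely the Dichotomy Theorem of \cite{Gold}, which the paper simply cites rather than reproving, so your reduction (countably many $\d$-computable isomorphism types, Conjunction Lemma applied to the $\neg P_i$) is exactly the intended proof of $(2)\Rightarrow(3)$. One small caveat in $(3)\Rightarrow(1)$: the $\d$-computable elementary diagram lives on the presentation transported from $A$ by the isomorphism, not literally on the presentation given by the constants of $C$, but since the statement asks only for \emph{some} presentation this is harmless.
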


\begin{proof}
The implications (1) implies (2) and (3) implies (1) are obvious.  The implication (2) implies (3) follows from the so-called \emph{Dichotomy Theorem} (see \cite{Gold}) and the fact that only countably many models of $T$ can have a $\d$-computable presentation for any given oracle $\d$.
\end{proof}

\begin{rmk}
At first glance, it seems that, given an oracle $\d$, one can use a diagonalization argument to prevent the compiled structure from having a $\d$-computable presentation.  However, that strategy only prevents the ``canonical'' presentation of the compiled structure (that is, the one given in terms of the constants from $C$) from being $\d$-computable.
\end{rmk}

\begin{rmk}
In the case of II$_1$ factors, \cstar-algebras, and stably finite \cstar-algebras, it is unknown if there is an enforceable model.  However, in the case of II$_1$ factors, we conjecture that having an enforceable model is equivalent to the truth of the Connes Embedding Problem (CEP).  (In \cite{Gold}, it was shown that CEP is equivalent to the hyperfinite II$_1$ factor itself being enforceable.)  Recently, a purported proof of the failure of CEP has appeared \cite{quantum}.  If this proof is correct and the aforementioned conjecture holds, then this would show that, given any oracle $\d$, it is not enforceable that the compiled II$_1$ factor have a $\d$-computable presentation.
\end{rmk}



\section{Degrees of theories of existentially closed models}

Fix an $L$-theory $T$.  We recall the following definition:

\begin{defn}
The $L$-structure $A$ is an \textbf{existentially closed (e.c.) model of $T$} if $A$ is a model of $T$ and for any $B\models T$ with $A\subseteq B$ and any existential $L_A$-sentence, one has $\varphi^A=\varphi^B$.
\end{defn}

There is a lot to say about e.c. operator algebras (see \cite{ecfactor} and \cite{KEP} for example).  In this section, we are concerned with the complexity of $\ulcorner \Th(M)\urcorner$ for $M$ an e.c. model of $T$.  Our main result is the following, which is a continuous analog of a theorem of Simpson:

\begin{thm}\label{manyectheories}
Suppose that $\ulcorner T\urcorner$ is arithmetic.  If there is an e.c. model $M$ of $T_\forall$ such that $\ulcorner\Th(M)\urcorner$ is not hyperarithmetic, then there are continuum many different theories of e.c. models of $T_\forall$.
\end{thm}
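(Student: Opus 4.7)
The plan is to follow the classical strategy of Simpson and prove the continuous analogue by appealing to the Mansfield-Solovay perfect set theorem: every $\Sigma_1^1(a)$ set of reals either is contained in the class of reals hyperarithmetic in $a$, or else contains a perfect subset and hence has cardinality $2^{\aleph_0}$.

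First I would show that the set
\[
E := \{\ulcorner\Th(N)\urcorner \,:\, N \text{ is an e.c. model of } T_\forall\}\subseteq\mathcal P(\mathbb N)
\]
is $\Sigma_1^1$ in the parameter $\ulcorner T\urcorner$. Any separable $L$-structure $N$ together with a generating enumeration can be coded by a set $Y\subseteq\mathbb N$ describing a full elementary diagram in an expanded language $L(C)$, where each $c\in C$ names a distinct element of $N$. The conditions that $Y$ codes a well-defined $L$-prestructure, that its completion models $T_\forall$, and that the zeroset of its theory equals $X$ are each arithmetic in the parameters $(Y,X,\ulcorner T\urcorner)$. The remaining condition, that $N$ is e.c.\ in models of $T_\forall$, amounts to saying that for every quantifier-free $L$-formula $\varphi(\bar x,\bar y)$ and every tuple $\bar c$ of constants from $C$,
\[
\left(\inf_{\bar y}\varphi(\bar c,\bar y)\right)^N \;=\; \inf\{r\in\mathbb D\,:\, T_\forall\cup Y\cup\{\varphi(\bar c,\bar d)\dotminus r\}\text{ is consistent for some new constants } \bar d\}.
\]
By the continuous Completeness Theorem, this replaces the second-order quantification over extensions $B\supseteq N$ by arithmetic-in-$\ulcorner T\urcorner$ consistency, expressible via the formula $\Con$ from Section 2. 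Existentially quantifying over $Y$ then places $E$ in $\Sigma_1^1(\ulcorner T\urcorner)$.

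Since $\ulcorner T\urcorner$ is arithmetic by hypothesis, being hyperarithmetic in $\ulcorner T\urcorner$ coincides with being hyperarithmetic. The assumed e.c.\ model $M$ supplies an element $\ulcorner\Th(M)\urcorner\in E$ that is not hyperarithmetic, so $E$ is not contained in the hyperarithmetic reals. Mansfield's theorem then produces a perfect subset of $E$, whence $|E|=2^{\aleph_0}$; that is, there are continuum many distinct theories of e.c.\ models of $T_\forall$.

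The main obstacle is the complexity bookkeeping in the first step. The naive formulation of ``$N$ is e.c.''\ quantifies over all extensions $B\models T_\forall$ containing $N$, which is a priori $\Pi_1^1$; replacing it by an arithmetic condition requires the Completeness Theorem to translate the existence of witnesses in some extension into the consistency of an explicit diagram, whose complexity is in turn controlled by the arithmetic complexity of $\ulcorner T\urcorner$. Once this translation is set up, the rest is a direct invocation of Mansfield's perfect set theorem for $\Sigma_1^1$ sets.
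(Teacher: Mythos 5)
Your overall strategy is the same as the paper's: arithmetize, via the continuous Completeness Theorem, the statement that a subset of $\mathbb N$ codes the diagram of a separable e.c.\ model of $T_\forall$; conclude that $\{\ulcorner\Th(N)\urcorner \,:\, N\text{ an e.c.\ model of }T_\forall\}$ is $\Sigma_1^1$; and then invoke the effective perfect set theorem for $\Sigma_1^1$ sets (quoted in the paper as Fact~\ref{CH}). The paper routes the coding through maximal consistent Henkin $L(C)$-theories and their canonical models rather than through elementary diagrams of prestructures, but for a separable structure with a distinguished countable dense generating set these are the same data, so that is not a substantive difference, and your use of the relativized Mansfield--Solovay theorem together with the observation that hyperarithmetic-in-an-arithmetic-parameter coincides with hyperarithmetic is equivalent to the paper's appeal to Fact~\ref{CH}.

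There is, however, a concrete error in your arithmetization of existential closedness. You require
\[
\Bigl(\inf_{\bar y}\varphi(\bar c,\bar y)\Bigr)^{N}=\inf\bigl\{r\in\mathbb D \,:\, T_\forall\cup Y\cup\{\varphi(\bar c,\bar d)\dotminus r\}\text{ is consistent for some new }\bar d\bigr\},
\]
where $Y$ is the \emph{full} elementary diagram of $N$. Every model of $T_\forall\cup Y$ is (an expansion of) an elementary extension of $N$, so $Y$ alone already pins the value of $\inf_{\bar y}\varphi(\bar c,\bar y)$ to its value $v$ in $N$; moreover the theory on the right is consistent for every dyadic $r>v$ (approximate witnesses already exist in $N$) and inconsistent for $r<v$. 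Hence the right-hand side equals the left-hand side for \emph{every} model $N$ of $T_\forall$, and your condition is vacuous rather than a characterization of the e.c.\ models. The repair is exactly what the paper's formula $\Psi_5$ implements: in the consistency statement one must use only the quantifier-free (atomic) part of the diagram, so that models of the resulting theory are arbitrary extensions of $N$ modelling $T_\forall$ rather than elementary ones; the e.c.\ condition then reads that every existential $L(C)$-sentence consistent with $T_\forall$ plus the quantifier-free diagram already lies in the theory. With this correction (and the standard L\"owenheim--Skolem remark needed to replace the given e.c.\ model by a separable one of the same theory, a point the paper also passes over), the rest of your argument goes through as stated.
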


Our approach follows \cite[Chapter 7, Section 4]{HW}.  For the rest of this section, we assume that $\ulcorner T\urcorner$ is arithmetic.

The following definitions are taken from \cite{completeness}:



\begin{defn}
Suppose $T'$ is an $L$-theory.  We say that $T'$ is:
\begin{enumerate}
\item \textbf{maximal consistent} if it is consistent and:
\begin{enumerate}
\item if $\sigma\in \Sent_L$ is such that $\sigma\dotminus 2^{-n}\in T'$ for all $n$, then $\sigma\in T'$, and
\item for any two sentences $\sigma,\tau\in \Sent_L$, either $\sigma\dotminus \tau\in T'$ or $\tau\dotminus \sigma\in T'$.
\end{enumerate}
\item \textbf{Henkin} if for every $\varphi(x)\in \operatorname{Form}_L$, $p,q\in \mathbb D$ with $p<q$, there is a constant symbol $c$ such that $\min(\sup_x\varphi(x)\dotminus q,p\dotminus \varphi(c))\in T'$.
\end{enumerate}
\end{defn}

In the proof of the next lemma, we ask the reader to recall the definitions of the computable functions $f$ and $g$ from Lemma \ref{fandg}.

\begin{lemma}
There is an arithmetic formula $\Phi_0(X)$ such that, for $S\subseteq \mathbb N$, one has $\Phi_0(S)$ iff $S=\ulcorner T'\urcorner$ for some maximal consistent Henkin $L(C)$-theory $T'$ such that $T_\forall\subseteq T'_\forall$.
\end{lemma}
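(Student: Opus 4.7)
The plan is to construct $\Phi_0(X)$ explicitly as the conjunction of one arithmetic subformula per defining clause, using the building blocks already in place: $\Phi_{\Sent_{L(C)}}$, $\Con$, $\Phi_{\Theorem}$, the computable functions $f,g$ of Lemma \ref{fandg}, and the hypothesis that $\ulcorner T\urcorner$ is arithmetic.  The only extra work is adding a few more primitive recursive syntactic operations on G\"odel codes, needed for the Henkin clause.

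I would take $\Phi_0(X)$ to be the conjunction of the following arithmetic statements:
\begin{itemize}
\item $\Phi_{\Sent_{L(C)}}(X)$ (the elements of $X$ code $L(C)$-sentences);
\item $\Con(X)$, taken relative to $L(C)$ (consistency of $T_X$);
\item $\forall p\,\bigl(\varphi_{\Sent_{L(C)}}(p)\wedge \forall n\,(f(p,n)\in X)\to p\in X\bigr)$, encoding condition 1(a);
\item $\forall p,q\,\bigl(\varphi_{\Sent_{L(C)}}(p)\wedge \varphi_{\Sent_{L(C)}}(q)\to g(p,q)\in X \vee g(q,p)\in X\bigr)$, encoding condition 1(b);
\item a Henkin clause (below);
\item $\forall p\,\bigl(\varphi_{\forall\text{-}\Sent_L}(p)\wedge (T\vdash \varphi_p)\to \Phi_{\Theorem}(X,p)\bigr)$, encoding $T_\forall\subseteq T'_\forall$.
\end{itemize}

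For the Henkin clause I would observe that the syntactic operations used to build $\min(\sup_x\varphi(x)\dotminus q,\,p\dotminus \varphi(c))$ from its constituents — forming $\sup_x\varphi$, substituting a constant for a free variable, applying $\dotminus r$ for a dyadic $r$ (implemented via the primitive connectives $0$, $1$, $\cdot/2$, $\dotminus$), and taking $\min(a,b)=a\dotminus(a\dotminus b)$ — all act as primitive recursive operations on G\"odel codes.  Bundling them yields a primitive recursive $h(e,k,p',q')$ which, on a code $e$ of a one-free-variable restricted $L(C)$-formula, a code $k$ of a constant in $C$, and numerical codes $p',q'$ of dyadics with $p<q$, outputs the G\"odel number of the required witnessing sentence.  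The Henkin clause is then
$$\forall e,p',q'\,\bigl(\text{admissibility}(e,p',q')\to \exists k\,(h(e,k,p',q')\in X)\bigr),$$
patently arithmetic in $X$.

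For the final clause, the hypothesis that $\ulcorner T\urcorner$ is arithmetic combined with the remark in the text that $\{\ulcorner\sigma\urcorner: T\vdash\sigma\}$ is $\Sigma_1^{\ulcorner T\urcorner}$ yields an arithmetic formula defining $\{p:T\vdash\varphi_p\}$, so the clause is arithmetic in $X$.  That $\Phi_0(S)$ characterizes codes of maximal consistent Henkin theories extending $T_\forall$ is then immediate from a clause-by-clause comparison with the definition.  The only genuine obstacle is the tedious bookkeeping needed to verify that the syntactic operations above really are primitive recursive on the chosen G\"odel numbering; once that is confirmed, all remaining manipulations stay well inside the arithmetic hierarchy, so no ascent to $\Sigma_1^1$ is required.
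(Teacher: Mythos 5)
Your proposal is correct and follows essentially the same route as the paper: $\Phi_0$ is built as a conjunction of one arithmetic clause per defining condition (sentencehood of $X$, $\Con(X)$, the two maximality clauses via $f$ and $g$, a Henkin clause using a computable syntactic operation on G\"odel codes, and a final clause using that $\ulcorner T\urcorner$ arithmetic makes $\{p : T\vdash \varphi_p\}$ arithmetic). The only differences are cosmetic --- you spell out the primitive recursive bookkeeping for the Henkin witness sentence, and in two clauses you use membership in $X$ where the paper uses $\Phi_{\Theorem}(X,\cdot)$ (or vice versa), which is immaterial since maximal consistent Henkin theories are deductively closed.
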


\begin{proof}
Consider the following formulae:
\begin{itemize}
\item $\Psi_1(X)$ is $(\forall p)((\varphi_{\Sent_{L(C)}}(p)\wedge (\forall n)\Phi_{\Theorem}(X,f(p,n)))\rightarrow p\in X)$.
\item $\Psi_2(X)$ is $(\forall p,q)(g(p,q)\in X \vee g(q,p)\in X)$.
\item $\Psi_3(X)$ is $(\forall p)(\forall a,b\in \mathbb D)(a<b\rightarrow (\exists n)(\ulcorner \min(\sup_x\varphi_p(x)\dotminus b,a\dotminus \varphi_p(c_n))\urcorner\in X)$.
\item $\Psi_4(X)$ is $(\forall p)(\varphi_{\forall\text{-}\Sent_L}(p)\wedge \Phi_{\Theorem}(\ulcorner T\urcorner,p))\to p\in X)$.
\end{itemize}
Since $\ulcorner T\urcorner$ is arithmetical, it follows that $\Psi_4(X)$ is arithmetical, and thus $$\Phi_0(X):=\Phi_{\Sent_{L(C)}}(X)\wedge \Con(X)\wedge \bigwedge_{i=1}^4\Psi_i(X)$$ is an arithmetical formula.  It is clear that this $\Phi_0$ is as desired.
\end{proof}

As in classical logic, if $T'$ is a maximal consistent Henkin theory, then there is a canonical model $M_{T'}\models T'$ whose underlying universe is an appropriate quotient of the closed $L$-terms.  We refer the reader to \cite{completeness} for more details.  If $\Phi_0(S)$ holds, say $S=\ulcorner T'\urcorner$ for some maximal consistent Henkin theory $T'$, then we may write $M_S$ instead of $M_{T'}$.


\begin{thm}
There is an arithmetic predicate $\Phi_1(X)$ such that:
\begin{enumerate}
\item If $\Phi_1(S)$ holds, then $\Phi_0(S)$ holds and the $L$-reduct of $M_S$ is an e.c. model of $T_\forall$.
\item If $M$ is a separable e.c. model of $T_\forall$, then there is $S$ such that $\Phi_1(S)$ holds and $M\cong M_S$.
\end{enumerate}
\end{thm}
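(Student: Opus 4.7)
The plan is to take $\Phi_1(X):=\Phi_0(X)\wedge\Psi(X)$, where $\Psi(X)$ is an arithmetic formula asserting that the $L$-reduct of $M_X$ is existentially closed in the class of models of $T_\forall$.  The semantic content to be encoded is the following equivalence:  assuming $\Phi_0(X)$, the $L$-reduct of $M_X$ is e.c.\ for $T_\forall$ if and only if, for every existential $L(C)$-sentence $\varphi$, every $r\in\mathbb D^{>0}$, and every $n\in\mathbb N$,
\[
\ulcorner r\dotminus\varphi\urcorner\in X \;\Longrightarrow\; T_\forall\cup\Diag(M_X)\vdash (r\dotminus\varphi)\dotminus 2^{-n}.
\]
The hypothesis here is precisely $\varphi^{M_X}\ge r$.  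The forward direction uses Pavelka-style completeness applied to $T_\forall\cup\Diag(M_X)$, whose $L(C)$-models are exactly the $B\models T_\forall$ containing $M_X$ (because $\Diag(M_X)$ names every element of $M_X$ by a constant of $C$):  e.c.-ness forces $\varphi^B\ge\varphi^{M_X}\ge r$ in every such $B$, so $\sup_B(r\dotminus\varphi)^B=0$ and Pavelka gives the required provabilities.  The reverse direction is the converse semantic argument, combined with the automatic $\varphi^B\le\varphi^{M_X}$ for existential $\varphi$ in extensions.

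To encode $\Psi(X)$ arithmetically, set $X_{\operatorname{qf}}:=\{k\in X:\varphi_{\operatorname{qf}_{L(C)}}(k)\}$, which is computable in $X$, and use Lemma~\ref{fandg} to build a primitive recursive function $h$ with $h(p,r,n)=\ulcorner(r\dotminus\varphi_p)\dotminus 2^{-n}\urcorner$ whenever $p$ codes an existential $L(C)$-sentence and $r\in\mathbb D^{>0}$.  Then $\Psi(X)$ is a $\forall p,r,n$-quantified formula asserting that the hypothesis above implies $\Phi_{\Theorem}(\ulcorner T_\forall\urcorner\cup X_{\operatorname{qf}},\,h(p,r,n))$, where the union in the first slot is unpacked arithmetically using the hypothesis that $\ulcorner T\urcorner$ is arithmetic (so $\ulcorner T_\forall\urcorner$ is as well).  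Since $\Phi_{\Theorem}$ is arithmetic in its set argument, $\Psi(X)$, and hence $\Phi_1(X)$, is arithmetic.

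For item (1), $\Phi_0(S)$ already supplies a canonical $M_S$ whose $L$-reduct satisfies $T_\forall$, and the equivalence above converts $\Psi(S)$ into e.c.-ness of that reduct.  For item (2), given a separable e.c.\ model $M$ of $T_\forall$, I enumerate a countable dense subset of $M$ by the constants of $C$ and run the standard continuous-logic Henkin--Lindenbaum construction on the $L(C)$-elementary diagram of $M$, producing a maximal consistent Henkin $L(C)$-theory $T'$ whose canonical model is isomorphic to $M$.  Then $\Phi_0(\ulcorner T'\urcorner)$ is immediate, and $\Psi(\ulcorner T'\urcorner)$ follows directly from the e.c.\ hypothesis on $M$ via the same equivalence.

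The main obstacle is verifying the equivalence used to define $\Psi$.  The subtle step is the $(\Rightarrow)$ direction:  one must correctly identify the $L(C)$-models of $T_\forall\cup\Diag(M_X)$ with exactly the $T_\forall$-extensions of $M_X$ (using that, in the canonical presentation, the constants of $C$ densely name $M_X$) and then apply Pavelka completeness to pass from the semantic supremum to the provabilities.  A secondary subtlety arises in part (2):  one must arrange the Henkin construction so that the witnesses can be found inside $M$ itself, rather than in a proper extension.  This is possible because $M$ is e.c.\ and separable, so every existential $L(C)$-formula attains its infimum to within arbitrary $\epsilon>0$ among the enumerated dense subset.
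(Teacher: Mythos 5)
Your proposal is correct and follows essentially the same route as the paper: conjoin $\Phi_0$ with an arithmetic formula that expresses existential closedness of $M_X$ by extracting the quantifier-free part of $X$ as the diagram and invoking the arithmetic provability/consistency predicate $\Phi_{\Theorem}$ relative to $T_\forall$ plus that diagram, then for part (2) take the theory of $M$ expanded by constants for a countable dense subset. The only differences are cosmetic: the paper phrases the e.c.\ clause dually (every existential sentence \emph{consistent} with $T_\forall$ plus the diagram already lies in $X$) rather than via provability of lower bounds, and in part (2) no separate Henkin--Lindenbaum construction is needed since the theory of the densely-named expansion is already maximal consistent and Henkin by density alone.
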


\begin{proof}
Let $\operatorname{Diag}(p,X)$ be the formula 
$$\Phi_0(X)\wedge \varphi_{\operatorname{qf}_{L(C)}}(p)\wedge p\in X.$$
Let $\operatorname{Consist}(X,p)$ be the formula
$$\Phi_0(X)\wedge \varphi_{\Sent_{L(C)}}(p)\wedge \Con(\ulcorner T_\forall\urcorner\cup\{\phi_p\}\cup\{i \ : \ \operatorname{Diag}(i,X)\}).$$
Since $\ulcorner T\urcorner$ is arithmetic, so is $\ulcorner T_\forall \urcorner$, and thus so is $\operatorname{Consist}(X,p)$.
Let $\Psi_5(X)$ be 
$$\forall p((\operatorname{Consist}(X,p)\wedge \varphi_{\exists\text{-}\Sent_{L(C)}}(p))\rightarrow p\in X).$$
Finally, set $\Phi_1(X):=\Phi_0(X)\wedge \Psi_5(X)$.
If $\Phi_1(S)$ holds, then $M_S$ is clearly an e.c. model of $T_\forall$.  Conversely, suppose that $M$ is a separable e.c. model of $T_\forall$.  Let $A$ be a countable dense subset of $A$ closed under the function symbols and let $M_A$ be the expansion of $M$ to an $L(C)$-structure in the obvious way.  It is clear that $\operatorname{Th}(M_A)$ is a maximal consistent Henkin theory.  Let $S:=\ulcorner \operatorname{Th}(M_A)\urcorner$.  Since $M_S\cong M_A$, we have that $\Phi_1(S)$ holds.
\end{proof}

\begin{cor}\label{sigmadefinable}.
$\{\ulcorner \operatorname{Th}(M) \urcorner \ : \ M\text{ is an e.c. model of }T\}$ is a $\Sigma_1^1$ subset of $\mathcal P(\mathbb N)$.
\end{cor}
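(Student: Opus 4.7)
The plan is to read off a $\Sigma_1^1$ definition directly from the arithmetic predicate $\Phi_1(X)$ supplied by the previous theorem. That theorem provides a parametrization, up to isomorphism, of separable e.c.~models of $T_\forall$ by the sets $S \subseteq \mathbb N$ satisfying $\Phi_1(S)$, via the correspondence $S \mapsto M_S|_L$.

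The key bookkeeping step is to identify $\ulcorner \Th(M_S|_L)\urcorner$ in terms of $S$. By construction, $\Phi_1(S)$ implies $\Phi_0(S)$, so $S$ codes a maximal consistent Henkin $L(C)$-theory $T_S$, and the canonical model $M_S$ satisfies $\sigma^{M_S} = 0$ iff $\ulcorner\sigma\urcorner \in S$ for every $\sigma \in \Sent_{L(C)}$. Since every $L$-sentence is an $L(C)$-sentence and the $L$-reduct interprets $L$-sentences the same way, restricting to $L$ yields $\ulcorner \Th(M_S|_L)\urcorner = S \cap \ulcorner \Sent_L\urcorner$.

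Consequently,
\[
\{\ulcorner \Th(M)\urcorner : M \text{ separable e.c.~of } T_\forall\} = \{Y : \exists X\,(\Phi_1(X) \wedge \forall k\,(k \in Y \leftrightarrow (k \in X \wedge \varphi_{\Sent_L}(k))))\}.
\]
The right-hand side is $\Sigma_1^1$: $\Phi_1$ is arithmetic by the previous theorem, the inner biconditional is arithmetic in $X$ and $Y$ using the computable predicate $\varphi_{\Sent_L}$, and a single existential set quantifier brings this to $\Sigma_1^1$ form. To match the phrasing of the corollary, one applies downward L\"owenheim--Skolem: in the inductive operator-algebra settings of interest, e.c.~models of $T$ and of $T_\forall$ coincide, and the $L$-theory of every e.c.~model is already realized by a separable e.c.~model, so the set in the corollary agrees with the $\Sigma_1^1$ set above.

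I expect the genuine content of the proof to be entirely in the previous theorem; here the only nonroutine ingredient is this final separability reduction, while the $\Sigma_1^1$ verification itself is immediate once the parametrization is in hand.
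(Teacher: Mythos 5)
Your proof is correct and is essentially identical to the paper's: the paper also defines $\Phi_2(X):=\exists Y(\Phi_1(Y)\wedge \forall p(p\in X\leftrightarrow (p\in Y \wedge \varphi_{\Sent}(p))))$ and observes it is $\Sigma_1^1$ and defines the set in question. Your additional remarks on identifying $\ulcorner \Th(M_S|_L)\urcorner$ with $S\cap\ulcorner\Sent_L\urcorner$ and on the separability/L\"owenheim--Skolem reduction are details the paper leaves implicit, but they do not change the argument.
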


\begin{proof}
Let $\Phi_2(X):=\exists Y(\Phi_1(Y)\wedge \forall p(p\in X\leftrightarrow (p\in Y \wedge \varphi_{\Sent}(p)))$.  Then $\Phi_2$ is a $\Sigma_1^1$ formula and defines the set in question.
\end{proof}

The following can be found in \cite[Theorem IV.4.1.]{Barwise}:

\begin{fact}\label{CH}
If $X\subseteq \mathcal P(\mathbb N)$ is a $\Sigma_1^1$-definable set and contains a nonhyperarithmetic set, then $|X|=2^{\aleph_0}$.
\end{fact}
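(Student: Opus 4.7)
The plan is to derive the fact from two classical pillars of effective descriptive set theory: the perfect set theorem for $\Sigma_1^1$ sets, and the fact that every element of a countable $\Sigma_1^1$ subset of $\mathcal{P}(\mathbb{N}) \cong 2^\omega$ is hyperarithmetic. Granting these, the proof is a cardinality dichotomy. Either $X$ is countable, in which case every element of $X$ would be hyperarithmetic, contradicting the hypothesis; or $X$ is uncountable, whence the perfect set theorem yields a perfect $P \subseteq X$ and so $2^{\aleph_0} = |P| \leq |X| \leq 2^{\aleph_0}$.

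First I would set up the Suslin-scheme / tree representation: since $X \subseteq 2^\omega$ is $\Sigma_1^1$, there is a recursive tree $T \subseteq (2 \times \omega)^{<\omega}$ such that $x \in X$ iff the section tree $T_x := \{ s \in \omega^{<\omega} : (x \upharpoonright |s|,\, s) \in T\}$ has an infinite branch. Second, for the perfect set theorem, assume $X$ is uncountable and build, by recursion on $2^{<\omega}$, a Cantor scheme of subtrees $(T_\sigma)_{\sigma \in 2^{<\omega}}$ of $T$, each projecting onto an uncountable subset of $X$, arranging that for every $\sigma$ the projections of $T_{\sigma \frown 0}$ and $T_{\sigma \frown 1}$ differ on some coordinate of the first component. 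The combinatorial heart is the splitting lemma: any subtree $S \subseteq T$ with uncountably many projections admits such a split, because otherwise one could cover the projection by countably many subtrees each contributing only a single projection, contradicting uncountability. The intersection of nested branches in the scheme then yields a continuous injection $2^\omega \hookrightarrow X$. Third, for the countable case, the singleton lemma says that if $\{x\}$ is $\Sigma_1^1$ then each bit $x(n)$ is $\Sigma_1^1$-computable from the defining formula, so $\{x\}$ is also $\Pi_1^1$, hence $\Delta_1^1$, hence $x$ is hyperarithmetic; the general countable case reduces to this by Lusin--Novikov uniformization, which produces countably many $\Sigma_1^1$ selectors whose images exhaust $X$.

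The main obstacle is the splitting lemma in the second step, because ``uncountable projection of a subtree'' is a purely cardinal notion rather than an effective one, and the recursion must be carried out carefully so that the pieces remain subtrees of the fixed tree $T$ at finite levels. The cleanest packaging I know unifies both pillars via a single boundedness argument: assign to each $y \notin X$ the Kleene--Brouwer rank of the well-founded tree $T_y$, which is a $\Pi_1^1$-norm on the complement $2^\omega \setminus X$; then $\Sigma_1^1$-boundedness implies that if $X$ contains no perfect subset then only ranks below $\omega_1^{\mathrm{CK}}$ occur in $2^\omega \setminus X$, and from this one reads off that every element of $X$ must be $\Delta_1^1$. This is essentially the proof given in \cite[Theorem IV.4.1]{Barwise}, and in the actual writeup I would either reproduce that proof in the form just sketched or simply defer to the reference as the paper already does.
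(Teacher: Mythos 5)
The paper offers no proof of this statement: it is quoted verbatim from Barwise (Theorem IV.4.1), so the only question is whether your sketch would stand on its own. Your top-level dichotomy is the right one, and both pillars are true theorems: an uncountable $\Sigma_1^1$ set contains a perfect subset (Suslin), and every member of a countable $\Sigma_1^1$ set is hyperarithmetic (Harrison's effective perfect set theorem). In fact the second pillar in its sharp form --- a lightface $\Sigma_1^1$ set either has all of its members $\Delta_1^1$ or contains a perfect subset --- yields the Fact in a single step, with no case split and no appeal to the classical perfect set theorem at all.

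The genuine gaps are in your proposed proof of the countable case. First, Lusin--Novikov is a theorem about Borel sets, and in its boldface form it produces Borel selectors involving arbitrary real parameters; feeding those into your singleton lemma would only show that each member of $X$ is hyperarithmetic \emph{relative to some parameter}, which is vacuous (every real is hyperarithmetic in itself). The lightface statement you would actually need --- that a countable lightface $\Sigma_1^1$ set is a countable union of lightface $\Sigma_1^1$ singletons --- is itself standardly derived \emph{from} the effective perfect set theorem, so this route is backwards at best. Second, the boundedness argument in your final paragraph is misstated: $\Sigma_1^1$-boundedness bounds the norm on $\Sigma_1^1$ \emph{subsets} of the $\Pi_1^1$ set $2^\omega\setminus X$, whereas you are asserting boundedness of the norm on all of $2^\omega\setminus X$; that holds precisely when $2^\omega\setminus X$ is $\Delta_1^1$ and is not a consequence of ``$X$ has no perfect subset.'' The correct argument instead runs a Cantor--Bendixson-style derivative on the tree $T$ itself (deleting nodes that do not admit two extensions incompatible in the first coordinate), uses boundedness to show the derivative stabilizes by some stage below $\omega_1^{CK}$, and concludes that either the stable kernel is nonempty, giving a perfect subset of $X$, or every $x\in X$ becomes isolated at some recursive stage and is then $\Delta_1^1$ by your (correctly stated) singleton lemma --- note that the asymmetry matters here, since $\Pi_1^1$ singletons need not be hyperarithmetic. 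Since the paper simply defers to Barwise, so may you; but if you write out a proof, prove the effective perfect set theorem directly rather than assembling it from Lusin--Novikov.
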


\noindent Theorem \ref{manyectheories} follows immediately from Corollary \ref{sigmadefinable} and Fact \ref{CH}.

At present, it is not known whether or not there are even two distinct theories of e.c. \cstar-algebras or two distinct theories of e.c. tracial von Neumann algebras.  Although we will not go into the definition here, there is a special kind of e.c. model called a \emph{finitely generic} model (see \cite{Gold} for the precise definition).  If $M$ is finitely generic, then $T^f=\Th(M)$, whence is hyperarithmetic if $T$ is hyperarithmetic.  Clearly, if we can find an e.c. \cstar-algebra (or tracial von Neumann algebra) with non-hypearithmetic theory, then its theory must differ from that of the finitely generic algebra.  The previous corollary says that in fact one would obtain continuum many different theories of e.c. algebras.

In the case of groups, full second-order arithmetic Turing reduces to the theory of so-called \emph{infinitely generic} groups (see, e.g. \cite[Section 5.3, Exercise 14]{Hodges}), giving one way to prove that there are continuum many theories of e.c. groups.

\begin{conj}
Second-order arithmetic Turing reduces to both the theory of infinitely generic \cstar-algebras and the theory of infinitely generic tracial von Neumann algebras. 
\end{conj}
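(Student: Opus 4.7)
The plan is to adapt the classical argument for groups (\cite[Section 5.3, Exercise 14]{Hodges}) to continuous logic, exploiting the inherent high descriptive complexity of the infinite forcing relation. The first task is to set up the infinite forcing companion $T^F$ for a continuous $L$-theory $T$ in parallel with the finite forcing companion developed in Section 4: the players again build a chain of conditions through an $\omega$-length game, but ``infinitely generic'' models are those in which every existential sentence consistent with the elementary diagram is realized. As in the classical case (\cite[Chapter~7]{HW}), one should then verify that, for arithmetic $T$, truth in the infinitely generic models is complete for $\Pi^1_1$ over $\mathbb N$, which is the content of the conjecture.

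The second step is to interpret arithmetic, and enough of its structure to make sense of ``$n \in A$'' for $A \subseteq \mathbb N$, inside an infinitely generic \cstar-algebra (resp.\ tracial von Neumann algebra) in a uniformly restricted-definable way. For \cstar-algebras one can isolate copies of $\mathbb N$ via matrix-unit systems inside a UHF-type subalgebra, or via spectral projections of a self-adjoint element with prescribed rational discrete spectrum. For tracial von Neumann algebras, the trace itself is a powerful coding tool: projections of trace $1/2^n$ and their Boolean relations (read off from traces of products) already allow one to pick out $\mathbb N$ and to test membership of $n$ in a subset coded by a further parameter.

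The third, and essential, step is the coding argument: given $A\subseteq \mathbb N$, use the richness of the infinitely generic model $M$ to find a parameter $c_A \in M$ and a fixed formula $\phi(x,y)$ such that $\phi(n, c_A)^M = 0$ if and only if $n\in A$. This is where infinite genericity does the real work: for any condition $p$, it is consistent with $T$ that some extension contains such a parameter, so genericity forces its presence in $M$. Composing the arithmetic interpretation with this coding, one translates each second-order quantifier $\exists X$ into an ordinary $\inf_x$ or $\sup_x$ over $M$, yielding the desired Turing reduction from second-order arithmetic to $\operatorname{Th}(M)$.

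The main obstacle I expect is twofold. First, continuous logic replaces Boolean truth by $[0,1]$-valued semantics, so coding must be carried out through zero-sets of formulas and through exact values of traces and norms; this places a premium on showing that the coding predicates are given by uniformly computable restricted formulas with controlled quantifier complexity. Second, one must perform the continuous analogue of the Simpson--Robinson complexity calculation for the infinite forcing relation: the finite-forcing bound $\Pi_{2n+2}^{\d}$ established in Section 4 must be refined into a full $\Pi^1_1$-completeness calculation for $\Vdash_F$, and passing from that forcing relation to the theory of infinitely generic models will require a delicate analysis of how conditions behave in the limit. If both hurdles can be cleared, the abundance of projections, matrix units, and traces in both classes should suffice to mirror the group argument and establish the conjecture.
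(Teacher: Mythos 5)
The statement you are addressing is stated in the paper as a \emph{conjecture}; the paper offers no proof of it, and your text is likewise a strategy outline rather than a proof. The outline is sensible as far as it goes --- it correctly identifies the classical template (the group case via \cite[Section 5.3, Exercise 14]{Hodges} and the Hirschfeld--Wheeler machinery) and correctly locates where the work would have to happen --- but every step that would actually constitute mathematical content is deferred. In particular, your third step asserts that ``for any condition $p$, it is consistent with $T$ that some extension contains such a parameter, so genericity forces its presence in $M$,'' but this is precisely the claim that needs an argument. Conditions are \emph{finite} sets of constraints of the form $\varphi<r$ with $\varphi$ quantifier-free; to code an arbitrary $A\subseteq\mathbb N$ (not assumed arithmetic, or even definable) into an infinitely generic model you need a single restricted formula $\phi(x,y)$ and a genericity argument producing a parameter $c_A$ whose $\phi$-zero-set is exactly $A$, uniformly in $A$. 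In the group case this is done with concrete group-theoretic coding (finitely generated subgroups, word problems, HNN-type embeddings); no analogue is currently known for \cstar-algebras or tracial von Neumann algebras, and the devices you list (matrix units, spectral projections, projections of trace $2^{-n}$) are named but not shown to yield a first-order (let alone restricted, quantifier-bounded) coding predicate. This is exactly why the statement remains open.

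A secondary gap: your first step conflates what needs to be proved with what is known. The $\Pi^1_1$ upper bound on the infinite forcing relation is the routine direction (quantifying over all models of $T$ containing a given structure is naturally $\Pi^1_1$); the conjecture is the \emph{lower} bound, i.e., that second-order arithmetic reduces to the theory of the infinitely generic models, and the finite-forcing complexity bounds from Section 4 of the paper ($\Pi^{\d}_{2n+2}$ per quantifier level) give no leverage on that. So while your proposal is a fair description of what a proof would have to look like, it does not establish the conjecture, and you should present it as a research programme rather than a proof.
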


If one restricts to so-called \emph{embeddable} tracial von Neumann algebras, that is, those that embed into an ultrapower of $\mathcal R$, then $T^f=\Th(\mathcal R)$.  If the previous conjecture held for this class as well, then this would show that $\Th(\mathcal R)$ is not the theory of infinitely generic embeddable tracial von Neumann algebras, something that was erroneously claimed in \cite{ecfactor}.

Tangentially related to this discussion, we recall that first-order arithmetic Turing reduces to the theory of any e.c. group \cite[Section 3.3, Exercise 8]{Hodges}.  This leads us to:

\begin{conj}
First-order arithmetic Turing reduces to the theory of any (embeddable) e.c. II$_1$ factor.
\end{conj}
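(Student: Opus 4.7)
The natural plan is to adapt to the operator-algebra setting the Higman--Boone style argument that establishes the analogous statement for e.c.\ groups. In that setting, one uses that every finitely presented group embeds into any e.c.\ group, and that word problems of f.p.\ groups, threaded through quantifier alternations in the theory, realize every level of the arithmetic hierarchy inside $\Th(G)$. My first task would therefore be to show that $L(\Gamma)$ embeds trace-preservingly into any e.c.\ II$_1$ factor $M$ for every countable discrete group $\Gamma$ (and into any embeddable e.c.\ II$_1$ factor when $\Gamma$ is sofic). Since $L(\Gamma)$ is always a tracial von Neumann algebra for countable $\Gamma$ and hence embeds into some II$_1$ factor (e.g.\ via a free product), such an embedding should follow from existential closedness together with the joint embeddability of any two (embeddable) II$_1$ factors.

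Second, for an f.p.\ group $\Gamma$ with generators $g_1,\ldots,g_k$, relators $r_1,\ldots,r_m$, and a word $w$, I would consider the quantifier-free expression asserting that the $u_i$ are unitaries satisfying the relators while $w(\bar u)\neq 1$; by Step 1, the corresponding $\inf_{\bar u}$-sentence vanishes in $M$ precisely when $w\neq 1$ in $\Gamma$, exploiting the $\sqrt{2}$ gap in $\|\cdot\|_2$-distance between distinct group elements to make the encoding sharp. Since finitely presented groups with r.e.-complete word problem exist (Boone--Novikov), this reduces $\mathbf 0'$ to the $\exists_1$-theory of $M$.

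To capture the \emph{entire} arithmetic hierarchy, I would encode a $\Sigma_n^0$ predicate by an $\exists_n$-sentence whose outermost existential blocks choose unitaries generating increasingly complex Higman-embedded f.p.\ groups, while the interleaving universal blocks verify relator satisfaction uniformly across the tower. The goal is that the alternations of continuous-logic quantifiers mirror exactly the alternations in the arithmetic formula; summing over $n$ would then yield $\mathbf 0^{(\omega)}\leq_T\Th(M)$. The main obstacle is this ascent step: continuous-logic infima and suprema are typically not attained, so the encoding must be robust against $\epsilon$-perturbations without the quantifier alternations collapsing into one another via continuity. A secondary difficulty is that in the embeddable case one needs sofic f.p.\ groups of arbitrarily high arithmetic complexity, which is believed but not established in full generality; if one wishes to avoid the soficity issue by working with $\mathcal R$-ultrapowers directly, one must then control how word problems interact with the $2$-norm approximation inherent in $\mathcal R^\omega$. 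It is this tension between the continuous approximate nature of tracial von Neumann algebras and the sharp combinatorics of f.p.\ group word problems that I expect to form the core technical difficulty.
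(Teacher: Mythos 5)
The statement you are addressing is presented in the paper as a \emph{conjecture}, not a theorem: the paper offers no proof and explicitly poses it as an open problem, motivated by the classical fact (attributed to \cite[Section 3.3, Exercise 8]{Hodges}) that first-order arithmetic Turing reduces to the theory of any e.c.\ group. So there is no proof in the paper to compare yours against, and what you have written is a strategy outline rather than a proof --- you concede as much when you describe the ``ascent step'' and the approximate-versus-exact tension as the ``core technical difficulty'' without resolving them.

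Beyond being incomplete, your plan starts from a false premise. It is not true classically that every finitely presented group embeds into every e.c.\ group; by Macintyre's theorem a finitely generated group embeds into every e.c.\ group if and only if it has solvable word problem, so the Boone--Novikov groups you want to exploit are precisely the ones that fail to embed universally, and the classical interpretation of arithmetic in e.c.\ groups is correspondingly more delicate than ``embed a word-problem-complete f.p.\ group and read off its word problem.'' The same obstruction recurs, amplified, in the von Neumann algebra setting: existential closedness only transfers \emph{approximate} solutions of finitely many quantifier-free conditions from an extension back into $M$, so even for finitely presented $\Gamma$ you do not obtain an honest copy of $L(\Gamma)$ inside $M$ --- realizing the canonical trace requires $\tau(w(\bar u))=0$ for all nontrivial words $w$, which is a type that an e.c.\ model need not realize, and group relations among unitaries are not in general stable under $\|\cdot\|_2$-perturbations. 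Until you have a replacement for your Step 1 that works with approximate representations (and, in the embeddable case, addresses the soficity/hyperlinearity of the groups involved), the reduction does not get off the ground; this is consistent with the paper leaving the statement as a conjecture.
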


If the previous conjecture is true, then since $\R$ is an e.c. embeddable factor (see \cite{ecfactor}), one would have that $\Th(\R)$ is Turing-equivalent to first-order arithmetic, whence the upper bound from Section 3 would be sharp.

In \cite[Question 2.5]{ecfactor}, it was asked whether or not $\Th(\R)$ is $\forall\exists$-axiomatizable.  By Theorem \ref{presentation} above, if $\Th(\R)$ were $\forall_n$-axiomatizable, then $\Th(\R)$ would be arithmetic.  Consequently, if the previous conjecture were true, one would have that $\Th(\R)$ does not admit any quantifier simplification.


%
%

\end{document}